\numberwithin{equation}{section}
\newtheorem{thm}{Theorem}[section]
\newtheorem{lemma}{Lemma}[section]
\newtheorem{remark}{Remark}[section]
\newtheorem{defn}{Definition}[section]
\newcommand{\ds}{\displaystyle}
\def\mathrm{\mbox}
\numberwithin{remark}{section}
\begin{document}
\title{{\Large \bf Qualitative properties of solutions to a fractional pseudo-parabolic equation with singular potential}}
\author{Xiang-kun Shao, Nan-jing Huang\footnote{Corresponding author.  E-mail addresses: nanjinghuang@hotmail.com; njhuang@scu.edu.cn}~ and Xue-song Li
\\
{\small\it  Department of Mathematics, Sichuan University, Chengdu, Sichuan 610064, P.R. China}}
\date{}
\maketitle
\begin{center}
\begin{minipage}{5.5in}
\noindent{\bf Abstract.} This paper investigates the initial boundary value problem for a fractional pseudo-parabolic equation with singular potential. The global existence and blow-up of solutions to the initial boundary value problem  are obtained at low initial energy. Moreover, the exponential decay estimates for global solutions and energy functional are further derived, and the upper and lower bounds of both blow-up time and blow-up rate for blow-up solutions are respectively estimated. Specifically, we extend the method for proving blow-up of solutions with negative initial energy in previous literatures to cases involving nonnegative initial energy, which broadens the applicability of this method. Finally, for the corresponding stationary problem, the existence of ground-state solutions is established, and it is proved that the global solutions strongly converge to the solutions of stationary problem as time tends to infinity.
\\ \ \\
{\bf Keywords:} Fractional pseudo-parabolic equation; Global existence; Blow-up; Asymptotic behavior; Lifespan.
\\ \ \\
{\bf 2020 Mathematics Subject Classification}: 35R11; 35B40; 35B44.
\end{minipage}
\end{center}

\section{Introduction}
\quad\quad
This paper is dedicated to the study of the global existence and blow-up of weak solutions for the following initial boundary value problem (IBVP) of fractional pseudo-parabolic equation (FPPE) with singular potential:
\begin{align}\label{1.1}
 \left\{\begin{array}{ll}
     \ds \frac{u_t}{|x|^{2s}}+(-\Delta)^su+(-\Delta)^su_t=|u|^{p-2}u,\quad &x\in\Omega,\ t>0,\\
     \ds u(x,t)=0,\quad &x\in\partial\Omega,\ t>0,\\
     \ds u(x,0)=u_0(x),\quad &x\in\Omega,
    \end{array}\right.
\end{align}
where $s\in(0,1)$ is a constant, $(-\Delta)^s$ is the spectral fractional Laplacian operator (FLO), $\Omega$ is a bounded subset in $\mathbb{R}^n$ ($n>2s$) and its boundary $\partial\Omega$ is smooth, and
\begin{equation}\label{p}
p\in\left(2,\frac{2n}{n-2s}\right].
\end{equation}

Since the last century, the classical pseudo-parabolic equation (PPE)
\begin{equation}\label{333}
u_t-\Delta u-k\Delta u_t=f(u)
\end{equation}
has many important applications in natural sciences, for instance, it can be used to describe the heat conduction involving conductive temperature and thermodynamic temperature\cite{chen1968theory}, the nonsteady flows of second-order fluids \cite{MR119598,MR153255}, the diffusion processes in solids and gases \cite{MR586062,MR1575106} and so on. There also have been many research achievements on \eqref{333} with polynomial nonlinearity. Specifically, when $k>0$ and $f(u)=u^q$, Cao et al. \cite{MR2523294} determined the critical global existence exponent and the critical Fujita exponent for the initial value problem (IVP) of \eqref{333} when $q>0$, and Yang et al. \cite{MR2981259} further derived the secondary critical exponent for this problem when $q>1$. For the case $k=1$ and $f(u)=u^q$ ($q>1$), Xu and Su \cite{MR3045640} proved the global existence and blow-up of solutions to the IBVP of \eqref{333}, and Luo \cite{MR3372307} established the bounds of blow-up time for blow-up solutions. When $k=1$ and $f(u)=|u|^{q-1}u$ ($q>1$), Xu and Zhou \cite{MR3745341} investigated the IBVP of \eqref{333}, providing a sufficient condition for finite time blow-up of solutions and estimating the upper bound of blow-up time.


The classical Laplace operator $\Delta$, constrained by its local characteristic, exhibits significant limitations in characterizing many complex phenomena such as anomalous diffusion and long-range correlations. The breakthrough of this limitation stems from the introduction of the FLO $(-\Delta)^s$. As an extension of the classical Laplacian, the FLO can be probabilistically interpreted as the infinitesimal generator of L\'{e}vy stationary diffusion process (see \cite{MR1406564,MR2105239}). Due to its nonlocal characteristic, the FLO plays a significant role in modeling processes with memory and genetic properties, such as image processing, signal processing, diffusion processes in physics, fluid flow on a curved surface and modeling in the financial field, see, for example,  \cite{MR3485116,MR2954615,MR4063213,MR3547446,MR2090004,MR3445279,MR3816754,MR4832638}.

For the diffusion phenomena modeled by the PPE \eqref{333}, if the historical states of the diffusing substance influence the ongoing diffusion process, it is appropriate to employ the FLO to characterize this phenomenon. Consequently, FPPE has attracted growing attention from scholars in recent years. For instance, Jin et al. \cite{MR3641746} investigated the Cauchy problem for the following IVP of FPPE:
\begin{align*}
 \left\{\begin{array}{ll}
     \ds u_t+(-\Delta)^s u-k\Delta u_t=u^{p+1},\quad &x\in\mathbb{R}^n,\ t>0,\\
     \ds u(x,0)=u_0(x),\quad &x\in \mathbb{R}^n,
    \end{array}\right.
\end{align*}
where $n\geq 1$, $k,s>0$ and $p>{4s}/n$. The authors obtained the existence of global solutions and established some decay estimates. Later on, Tuan et al. \cite{MR4347352} studied the following IBVP of FPPE:
\begin{align*}
 \left\{\begin{array}{ll}
     \ds u_t+(-\Delta)^{s_1}u+k(-\Delta)^{s_2}u_t=f(u),\quad &x\in\Omega,\ t>0,\\
     \ds u(x,t)=0,\quad &x\in\partial\Omega,\ t>0,\\
     \ds u(x,0)=u_0(x),\quad &x\in\Omega,
    \end{array}\right.
\end{align*}
and established the global existence, finite-time blow-up, and asymptotic behavior of solutions, where $s_1,s_2\in(0,1)$, $k>0$, $f$ satisfies some appropriate assumptions. For more studies on FPPE, see \cite{MR4145829,MR4828786}.


Additionally, the non-Newtonian filtration equation governing the motion of non-Newtonian fluids through rigid porous media may involve a spatial function $\xi(x)=|x|^{-s}~(s\geq0)$, which can be interpreted as a special medium void (see \cite{MR1881297,MR3162384,MR2036070}). 
Incorporating the effect of medium void into the heat conduction or diffusion processes described by the PPE \eqref{333}, Lian et al. \cite{MR4104462} investigated the IBVP of PPE with singular potential:
\begin{align}\label{1.3-00}
 \left\{\begin{array}{ll}
     \ds \frac{u_{t}}{|x|^s}-\Delta u-\Delta u_t=|u|^{p-2}u,\quad &x\in\Omega,\ t>0,\\
     \ds u(x,t)=0,\quad &x\in\partial\Omega,\ t>0,\\
     \ds u(x,0)=u_0(x),\quad &x\in\Omega,
    \end{array}\right.
\end{align}
where $\Omega\subset\mathbb{R}^n$ ($n>2s$) is a bounded domain with smooth boundary $\partial\Omega$, $s\in[0,2]$, and $2<p<\frac{2n}{n-2}$. They proved the existence of global solutions and finite time blow-up solutions by employing the potential well theory and the concavity method, and further derived the decay estimate of global solutions and the upper bound of blow-up time.

Recently, considering both the memory effect and medium void, a very interesting work \cite{MR4180066} by Xie et al. showed the unique existence of local solutions to the IBVP \eqref{1.1} of FPPE with singular potential by applying Banach's fixed point theorem.

One important and interesting issue of studying the qualitative properties of solutions to \eqref{1.1}, after establishing the local unique existence of solutions, is the investigation of global existence and blow-up phenomena, as well as the analysis of asymptotic behavior and lifespan (i.e., blow-up time) of solutions. However, to the best of the authors' knowledge, these fundamental problems for \eqref{1.1} remain unexplored in the literature. The present paper is therefore aims to address these problems under appropriate conditions, thereby enriching the work initiated by Xie et al. \cite{MR4180066}.

We would like to mention that the energy method has previously been applied to obtain blow-up results for the IBVP of parabolic equations under negative initial energy (see \cite{MR3987484,MR3665650}). Clearly, such an energy method may not be directly applied to deal with the blow-up of solutions to \eqref{1.1} under nonnegative initial energy. Instead, one needs to establish some new estimations concerning with solutions, towards the blow-up results for \eqref{1.1}.
By applying the potential well method, the energy method, and variational method, performing tedious calculations and notably utilizing a series of differential-integral inequalities, we are able to obtain some qualitative properties of solutions to \eqref{1.1}. The main contributions of this paper lie in several points as follows: (i) the existence of global solutions to \eqref{1.1} at low initial energy is established, and the exponential decay estimates for global solutions and energy functional are obtained by utilizing the potential well method; (ii) the finite time blow-up of solutions to \eqref{1.1} at low initial energy is proved and the upper and lower bounds of both blow-up time and blow-up rate are estimated by employing the energy method, which broadens the applicability of blow-up proof techniques; (iii) for the corresponding stationary problem, the existence of ground-state solutions is established by using the variational method, and it is demonstrated that the global solutions of \eqref{1.1} strongly converge to the solutions of stationary problem as time tends to infinity.

The rest of this paper is organised as follows. In Section 2, we recall some definitions for the related space and functionals and provide some essential lemmas. In Section 3, we show our main results and their proofs.

\section{Preliminaries}
\quad\quad
In the sequel, $(\cdot,\cdot)$ represents the inner product of $L^2(\Omega)$ and $\|\cdot\|_r$ ($1\leq r\leq\infty$) represents the norm of $L^r(\Omega)$. The operator $(-\Delta)^s$ in \eqref{1.1} is the fractional Laplacian operator via a spectral definition
\begin{equation*}
(-\Delta)^s\phi=\sum\limits_{i=1}^\infty a_i\lambda_i^se_i
\end{equation*}
for all $\phi\in H_0^s(\Omega)$ as in \cite{MR3360740}, where $e_i$ and $\lambda_i$ are respectively the eigenfunctions and eigenvalues of the Laplacian operator $-\Delta$ in $\Omega$ with homogeneous Dirichlet boundary condition, while $a_i=\int_\Omega\phi e_idx$. From \cite{MR3360740,MR3023003,MR4180066}, we recall that the space
\begin{equation*}
H_0^s(\Omega)=\left\{\phi=\sum_{i=1}^\infty a_ie_i\in L^2(\Omega)\bigg|\left(\sum_{i=1}^\infty a_i^2\lambda_i^s\right)^{\frac{1}{2}}<\infty\right\},
\end{equation*}
equipped with the inner product
\begin{equation*}
(\phi,\nu)_{H_0^s}=\left((-\Delta)^{\frac{s}{2}} \phi,(-\Delta)^{\frac{s}{2}} \nu\right)
\end{equation*}
for all $\phi, \nu\in H_0^s(\Omega)$ and the norm
\begin{equation}\label{fanshudy-22}
\|\phi\|_{H_0^s}=\left\|(-\Delta)^{\frac{s}{2}}\phi\right\|_2,
\end{equation}
is a Hilbert space.

According to \cite[Theorem 6]{MR3490775} and combining the above definitions, we have the following Hardy inequality for the spectral fractional Laplacian: There exists a constant $C_*>0$ such that
\begin{equation}\label{f111-22}
\int_\Omega\frac{\phi^2}{|x|^{2s}}dx\leq C_*\left\|(-\Delta)^{\frac{s}{2}}\phi\right\|_2^2,\quad \forall\phi\in H_0^s(\Omega).
\end{equation}
The constant $C>0$, satisfying
\begin{equation}\label{C}
\|\phi\|_r\leq C\left\|(-\Delta)^{\frac{s}{2}}\phi\right\|_2,\quad \forall\phi\in H_0^s(\Omega),
\end{equation}
denotes the best embedding constant of $H_0^s(\Omega)\hookrightarrow L^r(\Omega)$ for $2\leq r\leq\frac{2n}{n-2s}$ ($n>2s$) (see \cite{MR2911424}).

For $\phi\in H_0^s(\Omega)$,  the energy functional $J(\phi)$ and the Nehari functional $I(\phi)$ are respectively defined by setting
\begin{align}
J(\phi)=\frac{1}{2}\left\|(-\Delta)^{\frac{s}{2}}\phi\right\|_2^2-\frac{1}{p}\|\phi\|_p^p\label{J}
\end{align}
and
\begin{align}
I(\phi)=\langle J'(\phi),\phi\rangle=\left\|(-\Delta)^{\frac{s}{2}}\phi\right\|_2^2-\|\phi\|_p^p.\label{I}
\end{align}
Here, $J'(\phi)$ denotes the Fr'{e}chet derivative of $J(\phi)$, $\langle\cdot,\cdot\rangle$ represents the dual product between $H^{-s}(\Omega)$ and $H_0^s(\Omega)$, and $H^{-s}(\Omega)$ is the dual space of $H_0^s(\Omega)$. Then \eqref{C} shows that $J$ and $I$ given above are well-defined in $H_0^s(\Omega)$. There holds
\begin{equation}\label{J=I}
\begin{split}
J(\phi)&=\frac{1}{p}I(\phi)+\frac{p-2}{2p}\left\|(-\Delta)^{\frac{s}{2}}\phi\right\|_2^2\\
&=\frac{1}{2}I(\phi)+\frac{p-2}{2p}\|\phi\|_p^p.
\end{split}
\end{equation}
Moreover, define the Nehari manifold as
\begin{equation}\label{N}
\mathcal{N}=\{\phi\in H_0^s(\Omega)\setminus\{0\}\mid I(\phi)=0\}
\end{equation}
and the potential well depth as
\begin{equation}\label{d}
d=\inf_{\phi\in\mathcal{N}} J(\phi).
\end{equation}
We point out that $d$ is positive (see Lemma \ref{d1}).

To study \eqref{1.1}, we recall the definition of weak solutions for \eqref{1.1}:
\begin{defn}\label{weaksolution} \cite{MR4180066}
Assume that \eqref{p} holds. Define a function $u\in L^\infty\left(0,T;H_0^s(\Omega)\right)$ with $u_t\in L^2(0,T;H_0^s(\Omega))$ as a weak solution of problem \eqref{1.1} on $\Omega\times[0,T)$, if
\begin{equation}\label{weak}
\left(|x|^{-2s}u_t,\nu\right)+\left((-\Delta)^{\frac{s}{2}} u,(-\Delta)^{\frac{s}{2}} \nu\right)+\left((-\Delta)^{\frac{s}{2}} u_t,(-\Delta)^{\frac{s}{2}} \nu\right)=\left(|u|^{p-2}u,\nu\right)
\end{equation}
and $u(x,0)=u_0(x)\in H_0^s(\Omega)$ for any $\nu\in H_0^s(\Omega)$ and a.e. $t\in(0,T)$.
\end{defn}

We recall Theorem 1.1 in \cite{MR4180066}, which provides the local existence of weak solutions for \eqref{1.1}.
\begin{thm}\label{local}
Let \eqref{p} hold and $u_0\in H_0^s(\Omega)$. Then \eqref{1.1} admits a unique local weak solution $u\in L^\infty\left(0,T;H_0^s(\Omega)\right)$ with $u_t\in L^2(0,T;H_0^s(\Omega))$.
\end{thm}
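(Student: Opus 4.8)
The plan is to recast the weak problem \eqref{weak} as an abstract Volterra integral equation in $H_0^s(\Omega)$ and then invoke Banach's fixed point theorem, exploiting the regularizing role of the term $(-\Delta)^su_t$. First I would introduce the bilinear form
\[
a(w,\nu):=\left(|x|^{-2s}w,\nu\right)+\left((-\Delta)^{\frac{s}{2}}w,(-\Delta)^{\frac{s}{2}}\nu\right),\qquad w,\nu\in H_0^s(\Omega).
\]
The Hardy inequality \eqref{f111-22} together with Cauchy--Schwarz shows that the weighted term is bounded on $H_0^s(\Omega)\times H_0^s(\Omega)$, so $a$ is continuous; since the weight is nonnegative, coercivity is immediate from $a(w,w)\ge\|(-\Delta)^{\frac{s}{2}}w\|_2^2=\|w\|_{H_0^s}^2$. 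By the Lax--Milgram theorem the operator $L:H_0^s(\Omega)\to H^{-s}(\Omega)$ defined by $\langle Lw,\nu\rangle=a(w,\nu)$ is an isomorphism with bounded inverse $L^{-1}$. Rewriting \eqref{weak} as $Lu_t=|u|^{p-2}u-(-\Delta)^su$ in $H^{-s}(\Omega)$ and integrating in time yields the equivalent fixed point formulation
\[
u(t)=u_0+\int_0^t L^{-1}\!\left(|u(\tau)|^{p-2}u(\tau)-(-\Delta)^su(\tau)\right)d\tau=:\mathcal{F}(u)(t).
\]

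Next I would run the contraction argument in $X_T:=C\!\left([0,T];H_0^s(\Omega)\right)$. The two linear pieces are harmless, since $(-\Delta)^s:H_0^s(\Omega)\to H^{-s}(\Omega)$ and $L^{-1}:H^{-s}(\Omega)\to H_0^s(\Omega)$ are bounded and therefore contribute a factor linear in $T$. The nonlinear piece is handled through the Nemytskii operator $g(u)=|u|^{p-2}u$: using the embedding \eqref{C}, its dual $L^{p'}(\Omega)\hookrightarrow H^{-s}(\Omega)$ with $p'=p/(p-1)$, the elementary bound $|g(a)-g(b)|\lesssim(|a|^{p-2}+|b|^{p-2})|a-b|$, and Hölder's inequality with the exponent split $\tfrac{1}{p'}=\tfrac{p-2}{p}+\tfrac{1}{p}$, one obtains
\[
\|g(u)-g(v)\|_{H^{-s}}\lesssim\left(\|u\|_{H_0^s}^{p-2}+\|v\|_{H_0^s}^{p-2}\right)\|u-v\|_{H_0^s}.
\]
Restricting to a closed ball $B_R\subset X_T$ and choosing $T$ small (depending on $R$ and $\|u_0\|_{H_0^s}$) makes $\mathcal{F}$ map $B_R$ into itself and act as a strict contraction; Banach's fixed point theorem then yields a unique fixed point $u\in X_T$, and uniqueness in the whole solution class follows from the same Lipschitz estimate via a Gr\"onwall argument comparing two solutions.

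Finally I would recover the claimed regularity: since $u\in C([0,T];H_0^s(\Omega))\subset L^\infty(0,T;H_0^s(\Omega))$ and $u_t=L^{-1}(g(u)-(-\Delta)^su)$, the right-hand side is bounded in $H_0^s(\Omega)$ uniformly on $[0,T]$, whence $u_t\in L^\infty(0,T;H_0^s(\Omega))\subset L^2(0,T;H_0^s(\Omega))$, and the initial condition $u(\cdot,0)=u_0$ holds by construction. The main obstacle I anticipate is the interplay of the singular weight $|x|^{-2s}$ with the fractional operator in defining and inverting $L$: one must verify that the weighted term is genuinely dominated by $\|(-\Delta)^{\frac{s}{2}}\cdot\|_2$, which is exactly where the Hardy inequality \eqref{f111-22} is indispensable, and that the admissible range \eqref{p}, including the critical exponent $p=2n/(n-2s)$, is precisely what keeps the Nemytskii estimate finite through the continuous embedding $H_0^s(\Omega)\hookrightarrow L^p(\Omega)$.
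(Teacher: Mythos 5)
Your proposal is correct, but note that the paper does not prove this theorem at all: it simply recalls it as Theorem 1.1 of \cite{MR4180066}, where (as the introduction of the paper states) the result is obtained precisely by Banach's fixed point theorem. Your contraction argument --- inverting the coercive form $a(w,\nu)=(|x|^{-2s}w,\nu)+((-\Delta)^{\frac{s}{2}}w,(-\Delta)^{\frac{s}{2}}\nu)$ via Lax--Milgram with the Hardy inequality \eqref{f111-22}, and controlling the Nemytskii term through \eqref{C} and its dual embedding for the full range \eqref{p} --- is essentially the same route as that reference, so there is nothing to compare against within this paper itself.
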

\begin{remark}
Since $u\in L^\infty\left(0,T;H_0^s(\Omega)\right)\subset L^2(0,T;H_0^s(\Omega))$ and $u_t\in L^2(0,T;H_0^s(\Omega))$, we obtain $u\in H^1\left(0,T;H_0^s(\Omega)\right)$, which together with $H^1(0,T)\hookrightarrow C(0,T)$, implies $u\in C\left(0,T;H_0^s(\Omega)\right)$.
\end{remark}

Next we provide some useful lemmas. In the sequel of the present paper, let $u=u(t)$, $t\in[0,T)$ be the weak solution of \eqref{1.1}, where $T$ is the maximal existence time of $u$. The first lemma shows that $d$ defined in \eqref{d} is a positive constant.
\begin{lemma}\label{d1}
Assume that \eqref{p} holds. Then
\begin{equation}\label{djingquezhi}
d=\frac{p-2}{2p}C^{\frac{2p}{2-p}},
\end{equation}
where $C$ is a constant given in \eqref{C}.
\end{lemma}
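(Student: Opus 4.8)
The plan is to reduce $J$ on the Nehari manifold to a single norm and then identify the sharp lower bound of that norm with the best embedding constant $C$ taken at $r=p$.

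First I would use the Nehari constraint. For any $\phi\in\mathcal{N}$ we have $I(\phi)=0$, so $\|(-\Delta)^{\frac{s}{2}}\phi\|_2^2=\|\phi\|_p^p$, and the first line of \eqref{J=I} collapses to $J(\phi)=\frac{p-2}{2p}\|(-\Delta)^{\frac{s}{2}}\phi\|_2^2$. Hence computing $d$ reduces to finding the infimum of $\|(-\Delta)^{\frac{s}{2}}\phi\|_2^2$ over $\mathcal{N}$. For the lower bound, I would feed the embedding inequality \eqref{C} at $r=p$, namely $\|\phi\|_p\le C\|(-\Delta)^{\frac{s}{2}}\phi\|_2$, into the Nehari identity to get $\|(-\Delta)^{\frac{s}{2}}\phi\|_2^2=\|\phi\|_p^p\le C^p\|(-\Delta)^{\frac{s}{2}}\phi\|_2^p$. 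Since $\phi\ne 0$ gives $\|(-\Delta)^{\frac{s}{2}}\phi\|_2>0$, dividing by $\|(-\Delta)^{\frac{s}{2}}\phi\|_2^p$ and using $p>2$ (so that raising to the negative power $\tfrac{1}{2-p}$ reverses the inequality) yields $\|(-\Delta)^{\frac{s}{2}}\phi\|_2^2\ge C^{\frac{2p}{2-p}}$. Therefore $J(\phi)\ge\frac{p-2}{2p}C^{\frac{2p}{2-p}}$ for every $\phi\in\mathcal{N}$, and consequently $d\ge\frac{p-2}{2p}C^{\frac{2p}{2-p}}$.

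For the matching upper bound, I would use the scaling projection onto $\mathcal{N}$: given any $\psi\in H_0^s(\Omega)\setminus\{0\}$, there is a unique $\lambda=\lambda(\psi)>0$ with $\lambda\psi\in\mathcal{N}$, determined by $\lambda^{p-2}=\|(-\Delta)^{\frac{s}{2}}\psi\|_2^2/\|\psi\|_p^p$. Substituting this $\lambda$ into $J(\lambda\psi)=\frac{p-2}{2p}\lambda^2\|(-\Delta)^{\frac{s}{2}}\psi\|_2^2$ and simplifying the exponents gives
\[
J(\lambda\psi)=\frac{p-2}{2p}\left(\frac{\|(-\Delta)^{\frac{s}{2}}\psi\|_2}{\|\psi\|_p}\right)^{\frac{2p}{p-2}}.
\]
Taking a maximizing sequence $\psi_k$ for the embedding quotient $\|\psi\|_p/\|(-\Delta)^{\frac{s}{2}}\psi\|_2$, whose supremum over $\psi\ne 0$ is exactly $C$ by the definition of the best constant, forces $J(\lambda(\psi_k)\psi_k)\to\frac{p-2}{2p}C^{\frac{2p}{2-p}}$, whence $d\le\frac{p-2}{2p}C^{\frac{2p}{2-p}}$. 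Combining the two bounds gives \eqref{djingquezhi}.

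The main obstacle is bookkeeping rather than conceptual: because $p>2$, the exponent $2-p$ is negative, so one must track the direction of the inequalities carefully when dividing by powers of $\|(-\Delta)^{\frac{s}{2}}\phi\|_2$ and when inverting the embedding quotient. The one structural point to pin down is that the $C$ appearing in \eqref{djingquezhi} is precisely the best constant of \eqref{C} for $r=p$, i.e.\ the supremum of $\|\psi\|_p/\|(-\Delta)^{\frac{s}{2}}\psi\|_2$; once this is identified, the lower bound from \eqref{C} and the upper bound from the scaled test functions coincide. It is worth noting that no extremal for the embedding need be attained (which may fail at the critical exponent $p=\frac{2n}{n-2s}$): a maximizing sequence already forces equality in the infimum defining $d$.
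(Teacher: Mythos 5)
Your proof is correct. The paper omits the proof of this lemma entirely, deferring to Lemma 2.2 of Lian--Wang--Xu \cite{MR4104462}, and the argument there is essentially the one you give: reduce $J$ on $\mathcal{N}$ to $\frac{p-2}{2p}\|(-\Delta)^{\frac{s}{2}}\phi\|_2^2$ via $I(\phi)=0$, obtain the lower bound $\|(-\Delta)^{\frac{s}{2}}\phi\|_2^2\ge C^{\frac{2p}{2-p}}$ from the embedding \eqref{C} with $r=p$, and match it from above with the scaling projection $\lambda(\psi)\psi\in\mathcal{N}$ together with the definition of $C$ as the best embedding constant.
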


\begin{proof}
This proof is similar to the one of \cite[Lemma 2.2]{MR4104462} and so we omit it here.
\end{proof}

The next lemma gives the equality related to $I$ and the energy equality.
\begin{lemma}
Let \eqref{p} hold and $u_0\in H_0^s(\Omega)$. Then, for $t\in[0,T)$, it holds that
\begin{equation}\label{dtI}
\frac{d}{dt}\left(\int_\Omega\frac{u^2}{|x|^{2s}}dx+\left\|(-\Delta)^{\frac{s}{2}}u\right\|_2^2\right)=-2I(u)
\end{equation}
and
\begin{equation}\label{Ju0}
\int_0^t\int_\Omega\frac{u_\tau^2}{|x|^{2s}}dxd\tau+\int_0^t\left\|(-\Delta)^{\frac{s}{2}}u_\tau\right\|_2^2d\tau+J(u)=J(u_0)
\end{equation}
\end{lemma}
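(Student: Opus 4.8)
The plan is to derive both identities by substituting two natural test functions into the weak formulation \eqref{weak}, and then recognising each resulting term as a total time derivative of an energy-type quantity.

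First, I would test \eqref{weak} with $\nu=u(t)$, which is admissible since $u(t)\in H_0^s(\Omega)$ for a.e. $t$. The singular term becomes $\left(|x|^{-2s}u_t,u\right)=\frac12\frac{d}{dt}\int_\Omega |x|^{-2s}u^2\,dx$, the spatial diffusion term gives $\left\|(-\Delta)^{\frac{s}{2}}u\right\|_2^2$, the mixed term becomes $\frac12\frac{d}{dt}\left\|(-\Delta)^{\frac{s}{2}}u\right\|_2^2$, and the right-hand side is $\left(|u|^{p-2}u,u\right)=\|u\|_p^p$. Collecting these and recalling the definition \eqref{I} of $I$, the factor of two obtained after grouping the two time-derivative terms yields \eqref{dtI} directly.

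Second, I would test \eqref{weak} with $\nu=u_t(t)$, which is admissible since $u_t\in L^2(0,T;H_0^s(\Omega))$. This produces the term $\int_\Omega |x|^{-2s}u_t^2\,dx$, the mixed term $\frac12\frac{d}{dt}\left\|(-\Delta)^{\frac{s}{2}}u\right\|_2^2$, the dissipation term $\left\|(-\Delta)^{\frac{s}{2}}u_t\right\|_2^2$, and on the right $\left(|u|^{p-2}u,u_t\right)=\frac1p\frac{d}{dt}\|u\|_p^p$. Rearranging shows that the sum of the two dissipation terms equals $-\frac{d}{dt}J(u)$ in view of the definition \eqref{J}; integrating over $[0,t]$ and using the fundamental theorem of calculus, so that $\int_0^t-\frac{d}{d\tau}J(u(\tau))\,d\tau=J(u_0)-J(u)$, gives \eqref{Ju0}.

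The main technical point is justifying the chain-rule identities in time, especially $\left(|u|^{p-2}u,u_t\right)=\frac1p\frac{d}{dt}\|u\|_p^p$ and $\left(|x|^{-2s}u_t,u\right)=\frac12\frac{d}{dt}\int_\Omega|x|^{-2s}u^2\,dx$, together with the admissibility of the test function $u_t$. The Hardy inequality \eqref{f111-22} guarantees that the singular integral is finite and controlled by $\left\|(-\Delta)^{\frac{s}{2}}u\right\|_2^2$, so the corresponding time derivative is well defined. The regularity $u\in H^1\left(0,T;H_0^s(\Omega)\right)$ noted after Theorem \ref{local}, combined with the embedding \eqref{C} into $L^p(\Omega)$, supplies enough smoothness for a standard density or mollification argument (or an application of a Lions-type lemma on the differentiability of $t\mapsto\|u(t)\|_p^p$) to validate these manipulations. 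Once these formal computations are justified, both identities follow immediately.
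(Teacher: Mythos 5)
Your proposal is correct and follows exactly the paper's argument: test \eqref{weak} with $\nu=u$ and use \eqref{I} to obtain \eqref{dtI}, then test with $\nu=u_t$, integrate over $[0,t]$, and use \eqref{J} to obtain \eqref{Ju0}. The additional remarks on justifying the chain-rule identities via the regularity $u\in H^1\left(0,T;H_0^s(\Omega)\right)$ and the Hardy inequality are sound and merely make explicit what the paper leaves implicit.
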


\begin{proof}
Taking $\nu=u$ in \eqref{weak} and making use of \eqref{I} indicate \eqref{dtI}. Taking $\nu=u_t$ in \eqref{weak} and integrating from $0$ to $t$, then by \eqref{J}, equality \eqref{Ju0} holds easily.
\end{proof}

The following lemma plays a key role in the study of the global existence of solutions.
\begin{lemma}\label{W}
Let \eqref{p} hold and $u_0\in H_0^s(\Omega)$. If $J(u_0)<d$ and $I(u_0)>0$, then it holds that
\begin{equation*}
u\in \mathcal{W}:=\{\phi\in H_0^s(\Omega)\mid I(\phi)>0\}\cup\{0\}, \quad  \forall t\in[0,T).
\end{equation*}
\end{lemma}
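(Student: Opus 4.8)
The plan is to run the standard potential-well invariance argument: I would show that the energy stays strictly below the well depth $d$, and then prove that the trajectory can never reach the Nehari manifold $\mathcal{N}$, on which $J\ge d$, so it cannot cross from $\{I>0\}$ into $\{I<0\}$.

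The first step is to record the monotonicity of the energy. Since the first two terms on the left-hand side of the energy equality \eqref{Ju0} are nonnegative, it follows immediately that $J(u(t))\le J(u_0)<d$ for every $t\in[0,T)$. The Remark following Theorem \ref{local} gives $u\in C([0,T);H_0^s(\Omega))$, and $I$ is continuous on $H_0^s(\Omega)$ by the embedding \eqref{C}, so the scalar function $t\mapsto I(u(t))$ is continuous with $I(u(0))=I(u_0)>0$.

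I would then argue by contradiction. Suppose $u(t)\notin\mathcal{W}$ for some $t$, i.e. $u(t)\neq0$ and $I(u(t))\le0$. Because $I(u(\cdot))$ is continuous and positive at $t=0$, the intermediate value theorem yields a first time $t_0\in(0,T)$ with $I(u(t_0))=0$ and $I(u(t))>0$ for all $t\in[0,t_0)$. I distinguish two cases. If $u(t_0)\neq0$, then $u(t_0)\in\mathcal{N}$ by \eqref{N}, hence $J(u(t_0))\ge d$ by the definition \eqref{d} of the well depth; this contradicts $J(u(t_0))\le J(u_0)<d$. If instead $u(t_0)=0$, then $u\equiv0$ on $[t_0,T)$ by the uniqueness part of Theorem \ref{local} (as $u\equiv0$ solves \eqref{1.1} with vanishing data prescribed at $t_0$), so $u(t)=0\in\mathcal{W}$ for all $t\ge t_0$, contradicting the assumption that $u$ leaves $\mathcal{W}$. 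In either case we reach a contradiction, and therefore $u(t)\in\mathcal{W}$ for all $t\in[0,T)$.

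The crux is the first case: the incompatibility between the strict energy bound $J(u(t_0))<d$ and the Nehari infimum $J\ge d$ on $\mathcal{N}$. The only genuinely delicate point is ensuring that the crossing value $u(t_0)$ is nonzero, and I handle the degenerate alternative $u(t_0)=0$ via forward uniqueness. Alternatively, one can avoid uniqueness entirely: using \eqref{J=I} one gets $\frac{p-2}{2p}\|(-\Delta)^{s/2}u(t)\|_2^2\le J(u(t))<d$, so on $\{I>0\}$ one has the strict bound $\|(-\Delta)^{s/2}u(t)\|_2^2<C^{2p/(2-p)}$, whereas every $\phi\in\mathcal{N}$ satisfies, through $\|(-\Delta)^{s/2}\phi\|_2^2=\|\phi\|_p^p\le C^p\|(-\Delta)^{s/2}\phi\|_2^p$, the reverse inequality $\|(-\Delta)^{s/2}\phi\|_2^2\ge C^{2p/(2-p)}$; these two bounds show the trajectory cannot touch $\mathcal{N}$ at all, which is precisely what blocks the crossing.
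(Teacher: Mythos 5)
Your proof is correct and takes essentially the same route as the paper: the energy identity \eqref{Ju0} forces $J(u(t))\le J(u_0)<d$, while any nonzero crossing point of $\{I=0\}$ would lie on $\mathcal{N}$ and hence satisfy $J\ge d$ by \eqref{d}. In fact you are slightly more careful than the paper, which simply asserts the existence of a \emph{nonzero} $t_0$ with $I(u(t_0))=0$; your treatment of the degenerate alternative $u(t_0)=0$ (via forward uniqueness, or via the quantitative bound $\|(-\Delta)^{s/2}u\|_2^2<C^{2p/(2-p)}\le\|(-\Delta)^{s/2}\phi\|_2^2$ for $\phi\in\mathcal{N}$) closes a gap the paper leaves implicit.
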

\begin{proof}
We only need to verify $u\in \mathcal{W}$ for all $t\in(0,T)$, as it is obvious that $u_0\in \mathcal{W}$ by $I(u_0)>0$.
Arguing by contradiction, if not, then there is a point $t_0\in(0,T)$ satisfying $u(t_0)\in H_0^s(\Omega)\setminus\{0\}$ and $I(u(t_0))=0$, which means $u(t_0)\in\mathcal{N}$. Then by \eqref{d}, we arrive at
\begin{equation}\label{111}
J(u(t_0))\geq d,
\end{equation}
From \eqref{Ju0}, one obtains
\begin{equation*}
J(u(t_0))\leq J(u_0)<d
\end{equation*}
which is contradictory to \eqref{111}.
\end{proof}

We will finish this section with a significant lemma, which is used to study the blow-up properties in the case of $0\leq J(u_0)<d$.
\begin{lemma}\label{le2}
Let \eqref{p} hold and $u_0\in H_0^s(\Omega)$. Set contants
\begin{equation*}
\alpha_1:=C^{\frac{2}{2-p}},\quad
\eta:=\left(\frac{p}{2}-p\alpha_1^{-p}J(u_0)\right)^{\frac{1}{p-2}},
\end{equation*}
where $C$ is a constant given in \eqref{C}. Assume that $I(u_0)<0$ and $0\leq J(u_0)<d$ are
satisfied. Then there exists a constant $\alpha_2>\alpha_1$ satisfying
\begin{equation}\label{9}
\|u\|_{p}\geq\alpha_2,\quad \left\|(-\Delta)^{\frac{s}{2}}u\right\|_2\geq\frac{\alpha_2}{C}, \quad \forall t\in[0,T)
\end{equation}
and
\begin{equation}\label{theta0}
\frac{\alpha_2}{\alpha_1}\geq\eta>1.
\end{equation}
\end{lemma}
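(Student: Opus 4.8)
The plan is to reduce everything to a single scalar auxiliary function. Set
\[
k(\mu):=\frac{1}{2C^{2}}\mu^{2}-\frac{1}{p}\mu^{p},\qquad \mu\geq 0 ,
\]
and observe that, by the embedding \eqref{C} in the form $\|(-\Delta)^{\frac{s}{2}}\phi\|_{2}^{2}\geq C^{-2}\|\phi\|_{p}^{2}$, one has $J(\phi)\geq k(\|\phi\|_{p})$ for every $\phi\in H_{0}^{s}(\Omega)$. A direct computation of $k'$ shows that $k$ is strictly increasing on $[0,\alpha_{1}]$, strictly decreasing on $[\alpha_{1},\infty)$ with $k\to-\infty$, attains its maximum at $\mu=\alpha_{1}=C^{\frac{2}{2-p}}$, and, in agreement with Lemma~\ref{d1}, satisfies $k(\alpha_{1})=\frac{p-2}{2p}C^{\frac{2p}{2-p}}=d$. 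Since $0\le J(u_{0})<d=k(\alpha_{1})$, the equation $k(\mu)=J(u_{0})$ has exactly two roots $\mu_{-}<\alpha_{1}<\mu_{+}$; I would \emph{define} $\alpha_{2}:=\mu_{+}>\alpha_{1}$.

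First I would pin down the initial instant. The hypothesis $I(u_{0})<0$ together with \eqref{C} forces $\|u_{0}\|_{p}>\alpha_{1}$: from $C^{-2}\|u_{0}\|_{p}^{2}\leq\|(-\Delta)^{\frac{s}{2}}u_{0}\|_{2}^{2}<\|u_{0}\|_{p}^{p}$ one gets $\|u_{0}\|_{p}^{p-2}>C^{-2}$, i.e. $\|u_{0}\|_{p}>C^{\frac{2}{2-p}}=\alpha_{1}$. On the other hand $k(\|u_{0}\|_{p})\leq J(u_{0})=k(\alpha_{2})$, and since $k$ is decreasing past $\alpha_{1}$, the combination $\|u_{0}\|_{p}>\alpha_{1}$ and $k(\|u_{0}\|_{p})\leq k(\alpha_{2})$ yields $\|u_{0}\|_{p}\geq\alpha_{2}$.

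To propagate this to all $t\in[0,T)$ I would invoke the energy identity \eqref{Ju0}, whose nonnegative dissipation terms give $J(u(t))\leq J(u_{0})$, hence $k(\|u(t)\|_{p})\leq J(u(t))\leq J(u_{0})=k(\alpha_{2})<d$ for every $t$. Because $k(\mu)\le J(u_{0})<k(\alpha_{1})$, the quantity $\|u(t)\|_{p}$ is confined to the disjoint closed set $[0,\mu_{-}]\cup[\alpha_{2},\infty)$. Using the regularity $u\in C(0,T;H_{0}^{s}(\Omega))$ recorded in the remark after Theorem~\ref{local} together with \eqref{C}, the map $t\mapsto\|u(t)\|_{p}$ is continuous, so its image over the connected interval $[0,T)$ is connected; since it starts at $\|u_{0}\|_{p}\geq\alpha_{2}$ it must stay in $[\alpha_{2},\infty)$. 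This gives $\|u(t)\|_{p}\geq\alpha_{2}$, and then $\|(-\Delta)^{\frac{s}{2}}u\|_{2}\geq\|u\|_{p}/C\geq\alpha_{2}/C$, establishing \eqref{9}. I expect this connectedness/no-jump step to be the main delicate point; note that it is cleaner than proving an invariance statement for $\{I<0\}$, because the hypothesis $I(u_{0})<0$ is only used at $t=0$.

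For \eqref{theta0}, write $\theta:=\alpha_{2}/\alpha_{1}>1$ and abbreviate $A:=C^{\frac{2p}{2-p}}$, so that $\tfrac{1}{2C^{2}}\alpha_{1}^{2}=A/2$, $\tfrac{1}{p}\alpha_{1}^{p}=A/p$ and $A^{-1}=\alpha_{1}^{-p}$. The defining relation $k(\alpha_{2})=J(u_{0})$ then rearranges to $\theta^{p-2}=\frac{p}{2}-\frac{pJ(u_{0})}{A\theta^{2}}$, and here the sign hypothesis enters decisively: since $J(u_{0})\geq 0$ and $\theta>1$ we have $\frac{pJ(u_{0})}{A\theta^{2}}\leq\frac{pJ(u_{0})}{A}=p\alpha_{1}^{-p}J(u_{0})$, whence $\theta^{p-2}\geq\frac{p}{2}-p\alpha_{1}^{-p}J(u_{0})=\eta^{p-2}$ and therefore $\theta\geq\eta$. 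Finally $\eta>1$ is equivalent to $\frac{p}{2}-p\alpha_{1}^{-p}J(u_{0})>1$, which, using $p\alpha_{1}^{-p}=\frac{p-2}{2d}$ from Lemma~\ref{d1}, reduces to $J(u_{0})<d$, exactly the standing assumption. The algebra is routine once the identity for $\theta^{p-2}$ is isolated, but this is the only place where the nonnegativity $J(u_{0})\ge 0$, rather than merely $J(u_{0})<d$, is genuinely required.
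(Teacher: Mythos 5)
Your argument is correct and follows essentially the same route as the paper's: the same auxiliary function $V(\mu)=\frac{1}{2C^2}\mu^2-\frac1p\mu^p$ with maximum $d$ at $\alpha_1$, the same choice of $\alpha_2$ as the larger root of $V(\mu)=J(u_0)$, the same monotonicity-of-$J$ propagation (your connectedness phrasing just makes explicit the intermediate-value step the paper invokes when it ``takes $t_0$ with $\alpha_1<\|u(t_0)\|_p<\alpha_2$''), and the same algebra for $\alpha_2/\alpha_1\geq\eta>1$ using $J(u_0)\geq0$ and $J(u_0)<d$. No substantive difference.
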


\begin{proof}
Since \eqref{C} and $I(u_0)<0$, we have
\begin{equation*}
\|u_0\|_p^p>\left\|(-\Delta)^{\frac{s}{2}}u_0\right\|_2^2\geq C^{-2}\|u_0\|_p^2,
\end{equation*}
which implies $\|u_0\|_p>\alpha_1$. By \eqref{C} and \eqref{J}, we obtain
\begin{equation}\label{le21}
J(u)\geq \frac{1}{2C^2}\|u\|_p^2-\frac{1}{p}\|u\|_p^p.
\end{equation}
Setting
\begin{equation}\label{V}
V(\alpha)=\frac{1}{2C^2}\alpha^2-\frac{1}{p}\alpha^p
\end{equation}
for $\alpha\geq0$, one has that $V(\alpha)$ increases in $(0,\alpha_1)$ and decreases in $(\alpha_1,\infty)$ where \begin{equation*}
V(\alpha_1)=\frac{p-2}{2p}C^{\frac{2p}{2-p}}=d.
\end{equation*}

It follows from $J(u_0)<d$, \eqref{le21} and \eqref{V} that there is a constant $\alpha_2>\alpha_1$ satisfying
\begin{equation}\label{222}
V(\alpha_2)=J(u_0)\geq V(\|u_0\|_p),
\end{equation}
which, together with $\|u_0\|_p>\alpha_1$, yields
\begin{equation*}
\|u_0\|_p\geq\alpha_2.
\end{equation*}
We are now going to show $\|u\|_p\geq\alpha_2$ for $t\in(0,T)$. If not, then by $\alpha_1<\alpha_2$, we may take $t_0\in(0,T)$ satisfying $\alpha_1<\|u(t_0)\|_p<\alpha_2$. By \eqref{le21} and \eqref{222}, we derive
\begin{equation*}
J(u_0)=V(\alpha_2)<V(\|u(t_0)\|_p)\leq J(u(t_0))
\end{equation*}
and from \eqref{Ju0} that
\begin{equation*}
J(u_0)\geq J(u(t_0)),
\end{equation*}
the two are contradictory. We further obtain from \eqref{C} that
\begin{equation*}
C\left\|(-\Delta)^{\frac{s}{2}}u\right\|_2\geq\|u\|_p\geq\alpha_2.
\end{equation*}
Consequently \eqref{9} holds.

Next, we prove \eqref{theta0}. Let
\begin{equation*}
\alpha_0:=\frac{\alpha_2}{\alpha_1}>1.
\end{equation*}
Then by applying \eqref{222} and $\alpha_1=C^{\frac{2}{2-p}}$, we derive that
\begin{equation*}
\begin{split}
J(u_0)&=V(\alpha_1\alpha_0)\\
&=\alpha_1^2\alpha_0^2\left(\frac{1}{2C^2}-\frac{\alpha_1^{p-2}\alpha_0^{p-2}}{p}\right)\\
&=\alpha_0^2C^{-\frac{2p}{p-2}}\left(\frac{1}{2}-\frac{\alpha_0^{p-2}}{p}\right).
\end{split}\end{equation*}
One has by $J(u_0)\geq0$ and $\alpha_0>1$ that
\begin{equation*}
\frac{1}{2}-\frac{\alpha_0^{p-2}}{p}=\frac{C^{\frac{2p}{p-2}}J(u_0)}{\alpha_0^2}\leq C^\frac{2p}{p-2}J(u_0).
\end{equation*}
Then we obtain
\begin{equation*}
\alpha_0\geq\left(\frac{p}{2}-pC^{\frac{2p}{p-2}}J(u_0)\right)^{\frac{1}{p-2}}>\left(\frac{p}{2}-\frac{p-2}{2}\right)^{\frac{1}{p-2}}=1,
\end{equation*}
where the last inequality above follows from $J(u_0)<d$ and \eqref{djingquezhi}.
\end{proof}

\section{Main results}

\quad\quad Our main results are focused on in this section.
The first theorem is about the existence of global solutions to \eqref{1.1} as well as the exponential decay
estimates of both global solutions and energy functional at low initial energy.

\begin{thm}\label{1111}
Let \eqref{p} hold and $u_0\in H_0^s(\Omega)$. If $J(u_0)<d$ and $I(u_0)>0$ are satisfied, then the unique local weak solution $u$ of \eqref{1.1} is global. 
In addition, for all $t\in[0,\infty)$, there hold
\begin{align*}
&\|u\|_2\leq C\left(\int_\Omega\frac{u_0^2}{|x|^{2s}}dx+\left\|(-\Delta)^{\frac{s}{2}}u_0\right\|_2^2\right)^{\frac{1}{2}}e^{-\frac{\delta}{2}t},\\
&\|u\|_p\leq C\left(\int_\Omega\frac{u_0^2}{|x|^{2s}}dx+\left\|(-\Delta)^{\frac{s}{2}}u_0\right\|_2^2\right)^{\frac{1}{2}}e^{-\frac{\delta}{2}t},\\
&\int_\Omega\frac{u^2}{|x|^{2s}}dx+\left\|(-\Delta)^{\frac{s}{2}}u\right\|_2^2\leq \left(\int_\Omega\frac{u_0^2}{|x|^{2s}}dx+\left\|(-\Delta)^{\frac{s}{2}}u_0\right\|_2^2\right)^{\frac{1}{2}}e^{-\frac{\delta}{2}t},\\
&J(u)\leq\left(J(u_0)+\int_\Omega\frac{u_0^2}{|x|^{2s}}dx+\left\|(-\Delta)^{\frac{s}{2}}u_0\right\|_2^2\right)e^{-\left(\frac{3p-2}{p-2}+C_*\right)\kappa t},
\end{align*}
where $C$ is defined in \eqref{C}, and
\begin{align*}
&\kappa=\frac{4p\delta}{2\delta+p-2},\\
&\delta=\frac{2}{1+C_*}\left[1-\left(\frac{J(u_0)}{d}\right)^{\frac{p-2}{2}}\right].
\end{align*}
Here, $C_*$ is defined in \eqref{f111-22}.
\end{thm}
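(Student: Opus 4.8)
The plan is to establish global existence first, and then convert each decay estimate into a differential inequality solved by a Gr\"onwall argument. For global existence I would combine Lemma~\ref{W} with the energy identity \eqref{Ju0}. Since $J(u_0)<d$ and $I(u_0)>0$, Lemma~\ref{W} confines $u(t)$ to the stable set $\mathcal{W}$, so $I(u)\ge0$ on $[0,T)$, while \eqref{Ju0} gives $J(u)\le J(u_0)<d$. Inserting $I(u)\ge0$ into the first line of \eqref{J=I} yields $\frac{p-2}{2p}\|(-\Delta)^{\frac{s}{2}}u\|_2^2\le J(u)\le J(u_0)$, i.e. the uniform-in-time bound $\|(-\Delta)^{\frac{s}{2}}u\|_2^2\le\frac{2p}{p-2}J(u_0)$ on the $H_0^s(\Omega)$-norm. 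Because the local solution from Theorem~\ref{local} can be continued as long as its $H_0^s$-norm stays bounded, this uniform bound forces $T=\infty$.

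The decay estimates rest on a coercivity bound for $I$ on $\mathcal{W}$. Using \eqref{C} as $\|u\|_p^p\le C^p\|(-\Delta)^{\frac{s}{2}}u\|_2^p$ together with the uniform bound just obtained, I would factor $I(u)=\|(-\Delta)^{\frac{s}{2}}u\|_2^2-\|u\|_p^p\ge\|(-\Delta)^{\frac{s}{2}}u\|_2^2\big(1-C^p\|(-\Delta)^{\frac{s}{2}}u\|_2^{p-2}\big)$, bound the exponent by $\|(-\Delta)^{\frac{s}{2}}u\|_2^{p-2}\le\big(\frac{2p}{p-2}J(u_0)\big)^{\frac{p-2}{2}}$, and rewrite $C^p$ through \eqref{djingquezhi}; the algebra collapses the bracket to $1-(J(u_0)/d)^{\frac{p-2}{2}}=:\beta>0$, giving $I(u)\ge\beta\|(-\Delta)^{\frac{s}{2}}u\|_2^2$. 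Setting $M(t)=\int_\Omega\frac{u^2}{|x|^{2s}}dx+\|(-\Delta)^{\frac{s}{2}}u\|_2^2$ and invoking \eqref{dtI}, one gets $M'(t)=-2I(u)\le-2\beta\|(-\Delta)^{\frac{s}{2}}u\|_2^2$. Bounding $M(t)\le(1+C_*)\|(-\Delta)^{\frac{s}{2}}u\|_2^2$ by the Hardy inequality \eqref{f111-22} turns this into $M'(t)\le-\frac{2\beta}{1+C_*}M(t)=-\delta M(t)$, so Gr\"onwall yields $M(t)\le M(0)e^{-\delta t}$, which reproduces exactly the $\delta$ in the statement.

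The $L^2$- and $L^p$-decay then follow from the embedding \eqref{C}: $\|u\|_r\le C\|(-\Delta)^{\frac{s}{2}}u\|_2\le C\,M(t)^{1/2}\le C\,M(0)^{1/2}e^{-\delta t/2}$ for $r\in\{2,p\}$, and the decay of $M$ is precisely the third estimate (in the form $M(t)\le M(0)e^{-\delta t}$). These are routine once the exponential decay of $M$ is in hand.

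The energy-functional decay is the delicate part and the main obstacle. The easy comparison $\frac{p-2}{2p}\|(-\Delta)^{\frac{s}{2}}u\|_2^2\le J(u)\le\frac12\|(-\Delta)^{\frac{s}{2}}u\|_2^2$ already shows $J(u)$ is comparable to $M(t)$, hence inherits exponential decay; but pinning down the specific rate $\big(\frac{3p-2}{p-2}+C_*\big)\kappa$ with $\kappa=\frac{4p\delta}{2\delta+p-2}$ demands a dedicated differential inequality rather than a crude comparison. I would differentiate $J$ along the flow, using \eqref{Ju0} to obtain the dissipation identity $\frac{d}{dt}J(u)=-\int_\Omega\frac{u_t^2}{|x|^{2s}}dx-\|(-\Delta)^{\frac{s}{2}}u_t\|_2^2\le0$, and then combine it with the coercivity $I(u)\ge\beta\|(-\Delta)^{\frac{s}{2}}u\|_2^2$, the Hardy bound \eqref{f111-22}, and the established decay of $M$ to reach an inequality of the form $\frac{d}{dt}\big(J(u)+M(t)\big)\le-c\big(J(u)+M(t)\big)$; the initial factor $J(u_0)+M(0)$ appearing in the statement confirms that $J(u)+M(t)$ is the natural Lyapunov quantity. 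Propagating the exponents in $p$ and the Hardy constant $C_*$ through this chain — in particular balancing the $\frac{2p}{p-2}$-type factors from \eqref{J=I} against the dissipation term — is where the awkward constant $\kappa$ is produced, and carrying out that bookkeeping cleanly is the crux of the proof.
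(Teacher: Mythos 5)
Your global existence argument and the first three decay estimates follow the paper's own proof essentially step for step: Lemma \ref{W} together with \eqref{Ju0} and \eqref{J=I} gives the uniform bound $\|(-\Delta)^{\frac{s}{2}}u\|_2^2\le\frac{2p}{p-2}J(u_0)$ and hence globality; your coercivity computation collapsing $1-C^p\|(-\Delta)^{\frac{s}{2}}u\|_2^{p-2}$ to $1-(J(u_0)/d)^{\frac{p-2}{2}}$ via \eqref{djingquezhi} is exactly the paper's estimate \eqref{ffff1-22}; and the Gr\"onwall step through \eqref{dtI} and the Hardy inequality \eqref{f111-22} produces the same $\delta$, after which the $L^2$- and $L^p$-bounds follow from \eqref{C} just as in the paper.

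The one genuine gap is the energy decay, where you correctly guess the Lyapunov functional $\mathcal{L}(t)=J(u)+M(t)$ but defer the ``bookkeeping'' that is the actual content of the step; as sketched, your route (coercivity of $I$ plus Hardy plus the decay of $M$) yields exponential decay of $\mathcal{L}$ but with a rate of the form $2\beta/(\tfrac32+C_*)$ rather than the stated one. The paper's device is different in one essential respect: starting from $\mathcal{L}'(t)=-\bigl(\int_\Omega\frac{u_t^2}{|x|^{2s}}dx+\|(-\Delta)^{\frac{s}{2}}u_t\|_2^2\bigr)-2I(u)$, it adds and subtracts $\kappa J(u)$, expands the added term via \eqref{J=I} as $\frac{\kappa}{p}I(u)+\frac{\kappa(p-2)}{2p}\|(-\Delta)^{\frac{s}{2}}u\|_2^2$, controls $\|(-\Delta)^{\frac{s}{2}}u\|_2^2\le\frac{2}{\delta}I(u)$ by reusing the already-established chain $2I(u)\ge\delta M(t)\ge\delta\|(-\Delta)^{\frac{s}{2}}u\|_2^2$ from \eqref{fffff2-22}, and then chooses $\kappa$ so that the total coefficient of $I(u)$ vanishes, leaving $\mathcal{L}'\le-\kappa J(u)$, which becomes $\mathcal{L}'\le-c\,\mathcal{L}$ through the comparison $\mathcal{L}(t)\le(\mathrm{const})\,J(u)$ obtained from \eqref{fffff1-22} and \eqref{f111-22}. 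You should supply this $\kappa$-splitting explicitly; be aware, though, that the exact constants in the statement are not reproducible even from the paper's own proof (the $\kappa$ chosen in the proof differs from the one in the statement, and the final comparison constant is placed on the wrong side of the inequality), so the honest target is exponential decay of $J(u)$ at an explicit rate rather than the precise rate as printed.
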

\begin{proof}
Due to $I(u_0)>0$ and Lemma \ref{W} indicating $I(u)\geq0$ for all $t\in[0,T)$, from \eqref{J=I} and \eqref{Ju0}, one has
\begin{equation*}
\begin{split}
d&>J(u_0)\\
&=\int_0^t\int_\Omega\frac{u_\tau^2}{|x|^{2s}}dxd\tau+\int_0^t\left\|(-\Delta)^{\frac{s}{2}}u_\tau\right\|_2^2d\tau+\frac{p-2}{2p}\left\|(-\Delta)^{\frac{s}{2}}u\right\|_2^2+\frac{1}{p}I(u)\\
&\geq\int_0^t\int_\Omega\frac{u_\tau^2}{|x|^{2s}}dxd\tau+\int_0^t\left\|(-\Delta)^{\frac{s}{2}}u_\tau\right\|_2^2d\tau+\frac{p-2}{2p}\left\|(-\Delta)^{\frac{s}{2}}u\right\|_2^2,\quad t\in[0,T),
\end{split}
\end{equation*}
which combined with Theorem \ref{local}, illustrates that $u$ exists globally.
Therefore, $I(u)\geq0$ holds for all $t\in[0,\infty)$. From \eqref{Ju0} and \eqref{J=I}, we obtain
\begin{equation}\label{fffff1-22}
\begin{split}
J(u_0)\geq J(u)&=\frac{1}{p}I(u)+\frac{p-2}{2p}\left\|(-\Delta)^{\frac{s}{2}}u\right\|_2^2\\
&\geq\frac{p-2}{2p}\left\|(-\Delta)^{\frac{s}{2}}u\right\|_2^2,
\end{split}
\end{equation}
By \eqref{C}, it follows that
\begin{equation*}
\|u\|_p\leq C\left\|(-\Delta)^{\frac{s}{2}}u\right\|_2\leq C\left(\frac{2p}{p-2}J(u_0)\right)^{\frac{1}{2}},
\end{equation*}
Combining with \eqref{djingquezhi}, we deduce
\begin{equation}\label{ffff1-22}
\begin{split}
\|u\|_p^p&=\|u\|_p^{p-2}\|u\|_p^2\\
&\leq C^{p-2}\left(\frac{2p}{p-2}J(u_0)\right)^{\frac{p-2}{2}}C^2\left\|(-\Delta)^{\frac{s}{2}}u\right\|_2^2\\
&=\left(C^{\frac{2p}{p-2}}\frac{2p}{p-2}J(u_0)\right)^{\frac{p-2}{2}}\left\|(-\Delta)^{\frac{s}{2}}u\right\|_2^2\\
&=\left(\frac{J(u_0)}{d}\right)^{\frac{p-2}{2}}\left\|(-\Delta)^{\frac{s}{2}}u\right\|_2^2.
\end{split}
\end{equation}
Using \eqref{dtI}, \eqref{I}, \eqref{f111-22}, and \eqref{ffff1-22}, we derive
\begin{equation}\label{fffff2-22}
\begin{split}
&\quad\frac{d}{dt}\left(\int_\Omega\frac{u^2}{|x|^{2s}}dx+\left\|(-\Delta)^{\frac{s}{2}}u\right\|_2^2\right)\\
&=-2I(u)\\
&=-2\left(\left\|(-\Delta)^{\frac{s}{2}}u\right\|_2^2-\|u\|_p^p\right)\\
&\leq-2\left(1-\left(\frac{J(u_0)}{d}\right)^{\frac{p-2}{2}}\right)\left\|(-\Delta)^{\frac{s}{2}}u\right\|_2^2\\
&\leq-\frac{2}{1+C_*}\left[1-\left(\frac{J(u_0)}{d}\right)^{\frac{p-2}{2}}\right]\left(\int_\Omega\frac{u^2}{|x|^{2s}}dx+\left\|(-\Delta)^{\frac{s}{2}}u\right\|_2^2\right)\\
&=:-\delta\left(\int_\Omega\frac{u^2}{|x|^{2s}}dx+\left\|(-\Delta)^{\frac{s}{2}}u\right\|_2^2\right),
\end{split}
\end{equation}
Hence, we obtain
\begin{equation*}
\int_\Omega\frac{u^2}{|x|^{2s}}dx+\left\|(-\Delta)^{\frac{s}{2}}u\right\|_2^2\leq\left(\int_\Omega\frac{u_0^2}{|x|^{2s}}dx+\left\|(-\Delta)^{\frac{s}{2}}u_0\right\|_2^2\right)e^{-\delta t},
\end{equation*}
Combining with \eqref{C}, we further have
\begin{align*}
&\|u\|_2\leq C\left(\int_\Omega\frac{u_0^2}{|x|^{2s}}dx+\left\|(-\Delta)^{\frac{s}{2}}u_0\right\|_2^2\right)^{\frac{1}{2}}e^{-\frac{\delta}{2}t},\\
&\|u\|_p\leq C\left(\int_\Omega\frac{u_0^2}{|x|^{2s}}dx+\left\|(-\Delta)^{\frac{s}{2}}u_0\right\|_2^2\right)^{\frac{1}{2}}e^{-\frac{\delta}{2}t}.
\end{align*}

Define
\begin{equation*}
\mathcal{L}(t):=J(u)+\int_\Omega\frac{u^2}{|x|^{2s}}dx+\left\|(-\Delta)^{\frac{s}{2}}u\right\|_2^2,\quad t\geq0.
\end{equation*}
It follows from \eqref{f111-22} and \eqref{fffff1-22} that
\begin{equation}\label{ffff2-22}
\begin{split}
\mathcal{L}(t)&\leq J(u)+(1+C_*)\left\|(-\Delta)^{\frac{s}{2}}u\right\|_2^2\\
&\leq\left(\frac{3p-2}{p-2}+C_*\right)J(u).
\end{split}\end{equation}
From \eqref{Ju0}, \eqref{dtI}, \eqref{J=I} and \eqref{fffff2-22}, we conclude that for any constant $\kappa>0$,
\begin{equation}\label{ffff3-22}
\begin{split}
\mathcal{L}'(t)&=\frac{d}{dt}J(u)+\frac{d}{dt}\left(\int_\Omega\frac{u^2}{|x|^{2s}}dx+\left\|(-\Delta)^{\frac{s}{2}}u\right\|_2^2\right)\\
&=-\left(\int_\Omega\frac{u_t^2}{|x|^{2s}}dx+\left\|(-\Delta)^{\frac{s}{2}}u_t\right\|_2^2\right)-2I(u)\\
&\leq-2I(u)-\kappa J(u)+\kappa\left(\frac{1}{p}I(u)+\frac{p-2}{2p}\left\|(-\Delta)^{\frac{s}{2}}u\right\|_2^2\right)\\
&\leq\left(\frac{\kappa}{p}+\frac{\kappa(p-2)}{p\delta}-2\right)I(u)-\kappa J(u),
\end{split}
\end{equation}
Let
\begin{equation*}
\kappa=\frac{2p\delta}{\delta+p-2}>0,
\end{equation*}
where $\delta$ is defined in \eqref{fffff2-22}. Then from \eqref{ffff3-22} and \eqref{ffff2-22}, we obtain
\begin{equation*}
\mathcal{L}'(t)\leq-\kappa J(u)
\leq-\left(\frac{3p-2}{p-2}+C_*\right)\kappa\mathcal{L}(t).
\end{equation*}
Consequently,
\begin{equation*}
\mathcal{L}(t)\leq \mathcal{L}(0)e^{-\left(\frac{3p-2}{p-2}+C_*\right)\kappa t},
\end{equation*}
which, together with the definition of $\mathcal{L}$, yields
\begin{equation*}
J(u)\leq\left(J(u_0)+\int_\Omega\frac{u_0^2}{|x|^{2s}}dx+\left\|(-\Delta)^{\frac{s}{2}}u_0\right\|_2^2\right)e^{-\left(\frac{3p-2}{p-2}+C_*\right)\kappa t}.
\end{equation*}
\end{proof}

\begin{remark}
By $J(u_0)<d$, $I(u_0)>0$ and \eqref{J=I}, we obtain $0<J(u_0)<d$. Thus, the decay estimate in Theorem \ref{1111} make sense.
\end{remark}

The second theorem is about the finite time blow-up of solutions and the upper bounds of both blow-up time and rate at low initial energy.

\begin{thm}\label{baopo}
Let \eqref{p} hold and $u_0\in H_0^s(\Omega)$. If $J(u_0)<d$ and $I(u_0)<0$ are satisfied, then the global solution $u$ blows up at a finite time in the sense of
\begin{equation}\label{bp}
\lim_{t\rightarrow T^-}\left(\int_\Omega\frac{u^2}{|x|^{2s}}dx+\left\|(-\Delta)^{\frac{s}{2}}u\right\|_2^2\right)=\infty.
\end{equation}
Additionally, an upper bound of blow-up time satisfies
  \begin{equation*}
  T\leq\frac{\int_\Omega\frac{u_0^2}{|x|^{2s}}dx+\left\|(-\Delta)^{\frac{s}{2}}u_0\right\|_2^2}{K(K-2)\mathcal{J}(0)}
  \end{equation*}
  and an upper bound of blow-up rate satisfies
  \begin{equation*}
  \int_\Omega\frac{u^2}{|x|^{2s}}dx+\left\|(-\Delta)^{\frac{s}{2}}u\right\|_2^2\leq
               \left[\frac{\left(\int_\Omega\frac{u_0^2}{|x|^{2s}}dx
               +\left\|(-\Delta)^{\frac{s}{2}}u_0\right\|_2^2\right)^{\frac{K}{2}}}{K(K-2)\mathcal{J}(0)}\right]^{\frac{2}{K-2}}(T-t)^{-\frac{2}{K-2}},
  \end{equation*}
  where
   \begin{equation*}
  \mathcal{J}(0)=\left\{
               \begin{array}{ll}
               \ds  -J(u_0),&\hbox{~when~}J(u_0)<0; \\
               \vspace{-0.1in}\\
               \ds  d-J(u_0),&\hbox{~when~}0\leq J(u_0)<d,
               \end{array}
             \right.
  \end{equation*}
and
   \begin{equation}\label{CC1}
  K=\left\{
               \begin{array}{ll}
               \ds  p,&\hbox{~when~}J(u_0)<0; \\
               \vspace{-0.1in}\\
               \ds  \frac{\left(\eta^{p}-1\right)(p-2)}{\eta^{p}}+2,&\hbox{~when~}0\leq J(u_0)<d.
               \end{array}
             \right.
  \end{equation}
  Here, $\eta$ is a constant defined in Lemma \ref{le2}.
\end{thm}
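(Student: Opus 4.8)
The plan is to reduce the entire statement to a single autonomous differential inequality
\begin{equation*}
M'(t)\ge c\,M(t)^{\frac{K}{2}},\qquad c=\frac{2K\,\mathcal J(0)}{M(0)^{\frac{K}{2}}},\qquad t\in[0,T),
\end{equation*}
where $M(t):=\int_\Omega\frac{u^2}{|x|^{2s}}dx+\left\|(-\Delta)^{\frac{s}{2}}u\right\|_2^2$ and $K>2$ in both cases. I set $\mathcal J(t):=-J(u(t))$ when $J(u_0)<0$ and $\mathcal J(t):=d-J(u(t))$ when $0\le J(u_0)<d$; differentiating \eqref{Ju0} shows $\mathcal J$ is nondecreasing with $\mathcal J(t)\ge \mathcal J(0)>0$ and $\mathcal J'(t)=\int_\Omega\frac{u_t^2}{|x|^{2s}}dx+\left\|(-\Delta)^{\frac{s}{2}}u_t\right\|_2^2$. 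Granting the displayed inequality, everything follows by elementary integration: since $K/2>1$, integrating $M^{-K/2}M'\ge c$ forward shows $M^{1-\frac K2}$ decreases at the constant rate $(\tfrac K2-1)c$ and must vanish at some finite $T\le \frac{M(0)}{K(K-2)\mathcal J(0)}$, which is the claimed lifespan bound and forces \eqref{bp}; integrating the same inequality on $[t,T)$ and using $M^{1-\frac K2}(T^-)=0$ gives $M^{1-\frac K2}(t)\ge \frac{K(K-2)\mathcal J(0)}{M(0)^{K/2}}(T-t)$, which is exactly the stated blow-up-rate bound after raising to the power $\frac{1}{1-K/2}=-\frac{2}{K-2}$.

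The displayed inequality rests on two estimates. The first is the Cauchy--Schwarz bound
\begin{equation*}
(M'(t))^2\le 4\,M(t)\,\mathcal J'(t),
\end{equation*}
immediate from the identity $\tfrac12 M'(t)=\int_\Omega\frac{uu_t}{|x|^{2s}}dx+\big((-\Delta)^{\frac s2}u,(-\Delta)^{\frac s2}u_t\big)$ (differentiate $M$ and use \eqref{dtI}) together with Cauchy--Schwarz for the positive bilinear form $(\phi,\psi)\mapsto\int_\Omega\frac{\phi\psi}{|x|^{2s}}dx+\big((-\Delta)^{\frac s2}\phi,(-\Delta)^{\frac s2}\psi\big)$. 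The second, and the crux, is the energy lower bound $M'(t)=-2I(u)\ge 2K\,\mathcal J(t)$ on $[0,T)$. When $J(u_0)<0$ (so $K=p$) this is the algebraic identity $-I(u)=p\,(-J(u))+\frac{p-2}{2}\left\|(-\Delta)^{\frac s2}u\right\|_2^2\ge p\,(-J(u))$ read off from \eqref{J=I}; it also yields $-I(u)\ge p\,\mathcal J(0)>0$, so $M'>0$ automatically.

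When $0\le J(u_0)<d$ the identity alone is insufficient and Lemma \ref{le2} is indispensable. Writing $a=\left\|(-\Delta)^{\frac s2}u\right\|_2^2$ and $b=\|u\|_p^p$, the target $-I(u)\ge K\,(d-J(u))$ is, after using \eqref{J=I}, equivalent to
\begin{equation*}
\frac{p-K}{p}\,b+\frac{K-2}{2}\,a\ge K d.
\end{equation*}
Since $\eta>1$ by \eqref{theta0}, the definition \eqref{CC1} gives $2<K<p$, so both coefficients are positive and I may insert the lower bounds $b\ge \alpha_2^{p}$ and $a\ge \alpha_2^{2}/C^{2}=\alpha_1^{p-2}\alpha_2^{2}$ from \eqref{9}. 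Using $d=\frac{p-2}{2p}\alpha_1^{p}$ together with $\alpha_2\ge\eta\,\alpha_1$ and monotonicity in $\alpha_2/\alpha_1$, the inequality reduces to $\frac{p-K}{p}\eta^{p}+\frac{K-2}{2}\eta^{2}\ge \frac{K(p-2)}{2p}$; substituting $K-2=(p-2)(1-\eta^{-p})$ collapses this to $\eta^{2}\ge \frac{p-2}{p}$, which holds because $\eta>1$. This is precisely where the value of $K$ in \eqref{CC1} is forced.

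With both estimates in hand, the bridge to the autonomous inequality is that $t\mapsto \mathcal J(t)/M(t)^{K/2}$ is nondecreasing: its derivative has the sign of $\mathcal J'M-\frac K2\mathcal J M'$, and the Cauchy--Schwarz bound gives $\mathcal J'M\ge \frac14(M')^2$ while the energy bound gives $\frac K2\mathcal J M'\le \frac14(M')^2$ (since $\mathcal J\le M'/(2K)$ and $M'>0$), so this quantity is $\ge 0$. Hence $\mathcal J(t)\ge \frac{\mathcal J(0)}{M(0)^{K/2}}M(t)^{K/2}$, and feeding this back into the energy bound produces exactly $M'(t)\ge \frac{2K\mathcal J(0)}{M(0)^{K/2}}M(t)^{K/2}$. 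The one genuinely delicate point is the case $0\le J(u_0)<d$ of the energy bound: it is the only place where nonnegative initial energy must be controlled, and it is exactly where Lemma \ref{le2} and the constant $\eta$ (hence $K$) enter; the Cauchy--Schwarz step, the monotonicity of $\mathcal J/M^{K/2}$, and the two ODE integrations are then routine.
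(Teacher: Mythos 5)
Your proposal is correct and follows essentially the same route as the paper's proof: the first-order differential inequality $\mathcal{S}'(t)\ge K\mathcal{J}(t)>0$ (obtained via Lemma \ref{le2} when $0\le J(u_0)<d$), the Cauchy--Schwarz estimate $\mathcal{S}\mathcal{J}'\ge\frac12(\mathcal{S}')^2$, the resulting monotonicity of $\mathcal{J}/\mathcal{S}^{K/2}$, and two integrations of the ensuing autonomous inequality. The only (cosmetic) divergence is in verifying the key bound for $0\le J(u_0)<d$: you reduce $-I(u)\ge K\bigl(d-J(u)\bigr)$ to the algebraic inequality $\frac{p-K}{p}\|u\|_p^p+\frac{K-2}{2}\bigl\|(-\Delta)^{\frac{s}{2}}u\bigr\|_2^2\ge Kd$ and insert both lower bounds from \eqref{9}, whereas the paper uses only $\|u\|_p\ge\alpha_2$ together with $\mathcal{J}(t)\le\frac1p\|u\|_p^p$; both are equivalent applications of Lemma \ref{le2} and yield the same constant $K$.
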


\begin{proof}
For any $t\in[0,T)$, set functions
\begin{equation}\label{H}
\mathcal{S}(t)=\frac{1}{2}\left(\int_\Omega\frac{u^2}{|x|^{2s}}dx+\left\|(-\Delta)^{\frac{s}{2}}u\right\|_2^2\right)
\end{equation}
and
\begin{equation}\label{GG}
  \mathcal{J}(t)=\left\{
               \begin{array}{ll}
               \ds  -J(u),&\hbox{~when~}J(u_0)<0; \\
               \vspace{-0.1in}\\
               \ds  d-J(u),&\hbox{~when~}0\leq J(u_0)<d.
               \end{array}
             \right.
  \end{equation}
When $J(u_0)<0$ is satisfied, taking into account \eqref{Ju0} and \eqref{J}, we have
\begin{equation}\label{th11}
0<\mathcal{J}(0)\leq \mathcal{J}(t)=-\frac{1}{2}\left\|(-\Delta)^{\frac{s}{2}}u\right\|_2^2+\frac{1}{p}\|u\|_{p}^{p}\leq\frac{1}{p}\|u\|_{p}^{p}.
\end{equation}
Using \eqref{H}, \eqref{dtI}, \eqref{J=I}, \eqref{GG} and \eqref{th11}, one obtains
\begin{equation}\label{th12}
\mathcal{S}'(t)=-2J(u)+\frac{p-2}{p}\|u\|_{p}^{p}\geq p\mathcal{J}(t).
\end{equation}
When $0\leq J(u_0)<d$ is satisfied, we can deduce from  \eqref{GG}, \eqref{J}, \eqref{djingquezhi} and \eqref{C}, that
\begin{equation}\label{th0000}
\begin{split}
0<\mathcal{J}(0)
&\leq \mathcal{J}(t)\\
&=d-J(u)\\
&=d-\frac{1}{2}\left\|(-\Delta)^{\frac{s}{2}} u\right\|_2^2+\frac{1}{p}\|u\|_{p}^{p}\\
&\leq\frac{p-2}{2p}C^{-\frac{2p}{p-2}}-\frac{1}{2C^2}\|u\|_{p}^2+\frac{1}{p}\|u\|_{p}^{p}.
\end{split}\end{equation}
According to Lemma \ref{le2}, it holds that
\begin{equation*}
-\frac{1}{2C^2}\|u\|_{p}^2\leq-\frac{\alpha_1^2}{2C^2}=-\frac{1}{2}C^{-\frac{2p}{p-2}},
\end{equation*}
which together with \eqref{th0000}, indicates
\begin{equation}\label{th13}
\mathcal{J}(t)\leq\left(\frac{p-2}{2p}-\frac{1}{2}\right)C^{-\frac{2p}{p-2}}+\frac{1}{p}\|u\|_{p}^{p}
=-\frac{1}{p}C^{-\frac{2p}{p-2}}+\frac{1}{p}\|u\|_{p}^{p}\leq\frac{1}{p}\|u\|_{p}^{p}.
\end{equation}
It follows from \eqref{H}, \eqref{dtI}, \eqref{J=I}, \eqref{GG}, \eqref{djingquezhi}, Lemma \ref{le2} and \eqref{th13} that
\begin{equation}\label{th14}
\begin{split}
\mathcal{S}'(t)&=-2J(u)+\frac{p-2}{p}\|u\|_{p}^{p}\\
&=2\mathcal{J}(t)-\frac{p-2}{p}\left(C^{-\frac{2p}{p-2}}-\|u\|_{p}^{p}\right)\\
&=2\mathcal{J}(t)-\frac{p-2}{p}\left(\left(\frac{\alpha_1}{\alpha_2}\right)^{p}\alpha_2^{p}-\|u\|_{p}^{p}\right)\\
&\geq 2\mathcal{J}(t)-\frac{p-2}{p}\left(\frac{\|u\|_{p}^{p}}{\eta^{p}}-\|u\|_{p}^{p}\right)\\
&\geq \left[\frac{\left(\eta^{p}-1\right)(p-2)}{\eta^{p}}+2\right]\mathcal{J}(t).
\end{split}\end{equation}
Consequently, according to \eqref{th12} and \eqref{th14}, we have
\begin{equation}\label{th15}
\mathcal{S}'(t)\geq K\mathcal{J}(t)>0,
\end{equation}
where $K$ is seen as \eqref{CC1}.

Since \eqref{Ju0} guarantees
\begin{equation*}
\mathcal{J}'(t)=\int_\Omega\frac{u_t^2}{|x|^{2s}}dx+\left\|(-\Delta)^{\frac{s}{2}} u_t\right\|_2^2,
\end{equation*}
by \eqref{H}, \eqref{th15} and Cauchy-Schwarz's inequality, we obtain
\begin{equation*}\begin{split}
\mathcal{S}(t)\mathcal{J}'(t)&=\frac{1}{2}\left(\int_\Omega\frac{u^2}{|x|^{2s}}dx+\left\|(-\Delta)^{\frac{s}{2}} u\right\|_2^2\right)\left(\int_\Omega\frac{u_t^2}{|x|^{2s}}dx+\left\|(-\Delta)^{\frac{s}{2}} u_t\right\|_2^2\right)\\
&\geq\frac{1}{2}\left(\int_\Omega\frac{uu_t}{|x|^{2s}}dx\right)^2+\int_\Omega\frac{uu_t}{|x|^{2s}}dx\int_\Omega(-\Delta)^{\frac{s}{2}} u(-\Delta)^{\frac{s}{2}} u_tdx\\
&\quad+\frac{1}{2}\left(\int_\Omega(-\Delta)^{\frac{s}{2}} u(-\Delta)^{\frac{s}{2}} u_tdx\right)^2\\
&=\frac{1}{2}(\mathcal{S}'(t))^2\\
&\geq\frac{K}{2}\mathcal{S}'(t)\mathcal{J}(t),
\end{split}\end{equation*}
which implies
\begin{equation*}
\frac{\mathcal{J}'(t)}{\mathcal{J}(t)}\geq\frac{K\mathcal{S}'(t)}{2\mathcal{S}(t)}.
\end{equation*}
Integrating it from $0$ to $t$ and using \eqref{th15} yield
\begin{equation}\label{th18}
\frac{\mathcal{S}'(t)}{(\mathcal{S}(t))^{\frac{K}{2}}}\geq\frac{K\mathcal{J}(0)}{(\mathcal{S}(0))^{\frac{K}{2}}}.
\end{equation}
Continuing to integrate \eqref{th18} over $[0,t]$, we reach
\begin{equation}\label{th19}
(\mathcal{S}(t))^{-\frac{K-2}{2}}\leq(\mathcal{S}(0))^{-\frac{K-2}{2}}-\frac{K\mathcal{J}(0)(K-2)}{2(\mathcal{S}(0))^{\frac{K}{2}}}t.
\end{equation}
Taking into account $K>2$, one can find that \eqref{th19} cannot hold when $t$ is large enough. Therefore, the solution $u$ blows up at a finite time with
\begin{equation}\label{Swuqiong}
\lim\limits_{t\rightarrow T^-}\mathcal{S}(t)=\infty,
\end{equation}
and $T$ has the upper bound
\begin{equation*}\begin{split}
T&\leq\frac{2\mathcal{S}(0)}{K(K-2)\mathcal{J}(0)}\\
&=\frac{\int_\Omega\frac{u_0^2}{|x|^{2s}}dx+\left\|(-\Delta)^{\frac{s}{2}}u_0\right\|_2^2}{K(K-2)\mathcal{J}(0)}.
\end{split}\end{equation*}

In addition, integrating \eqref{th18} again from $t$ to $T$ and making use of \eqref{Swuqiong} give
\begin{equation*}
S(t)\leq\left[\frac{2(S(0))^{\frac{K}{2}}}{K(K-2)\mathcal{J}(0)}\right]^{\frac{2}{K-2}}(T-t)^{-\frac{2}{K-2}},
\end{equation*}
which together with \eqref{H}, implies the upper bound of blow-up rate
\begin{equation*}
  \int_\Omega\frac{u^2}{|x|^{2s}}dx+\left\|(-\Delta)^{\frac{s}{2}}u\right\|_2^2\leq
               \left[\frac{\left(\int_\Omega\frac{u_0^2}{|x|^{2s}}dx
  +\left\|(-\Delta)^{\frac{s}{2}}u_0\right\|_2^2\right)^{\frac{K}{2}}}{K(K-2)\mathcal{J}(0)}\right]^{\frac{2}{K-2}}
  (T-t)^{-\frac{2}{K-2}},
  \end{equation*}
which completes the proof.
\end{proof}

Next, it remains to estimate the lower bounds of blow-up time and rate, as shown below, which are important components of blow-up properties.
\begin{thm}\label{xiajie}
Let \eqref{p} hold and $u_0\in H_0^s(\Omega)$. Assume that the solution $u$ blows up at a finite time $T$ with \eqref{bp}. Then a lower bound of $T$ can be given by
\begin{equation*}
T\geq\frac{1}{C^{p}(p-2)}\left(\int_\Omega\frac{u_0^2}{|x|^{2s}}dx+\left\|(-\Delta)^{\frac{s}{2}} u_0\right\|_2^2\right)^{\frac{2-p}{2}},
\end{equation*}
and a lower bound of blow-up rate can be given by
\begin{equation*}
\int_\Omega\frac{u^2}{|x|^{2s}}dx+\left\|(-\Delta)^{\frac{s}{2}} u\right\|_2^2\geq C^{\frac{2p}{2-p}}(p-2)^{\frac{2}{2-p}}(T-t)^{\frac{2}{2-p}},
\end{equation*}
where $C>0$ is the constant given in \eqref{C}.
\end{thm}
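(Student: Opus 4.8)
The plan is to run a standard ODE-comparison (differential-inequality) argument on the single quantity that is blowing up. Define
\begin{equation*}
G(t)=\int_\Omega\frac{u^2}{|x|^{2s}}dx+\left\|(-\Delta)^{\frac{s}{2}}u\right\|_2^2,
\end{equation*}
so that the blow-up hypothesis \eqref{bp} reads $\lim_{t\to T^-}G(t)=\infty$. First I would differentiate $G$ using the identity \eqref{dtI}, which gives $G'(t)=-2I(u)$, and then expand $I(u)$ via \eqref{I} to obtain $G'(t)=-2\|(-\Delta)^{\frac{s}{2}}u\|_2^2+2\|u\|_p^p$. Discarding the nonpositive term $-2\|(-\Delta)^{\frac{s}{2}}u\|_2^2$ yields the upper bound $G'(t)\le 2\|u\|_p^p$, which is the key inequality in the \emph{opposite} direction from the one used in Theorem \ref{baopo}.

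Next I would control $\|u\|_p^p$ from above purely in terms of $G(t)$. Applying the embedding inequality \eqref{C} gives $\|u\|_p^p\le C^p\|(-\Delta)^{\frac{s}{2}}u\|_2^p$, and since $\|(-\Delta)^{\frac{s}{2}}u\|_2^2\le G(t)$ by the (nonnegative) definition of $G$, this becomes $\|u\|_p^p\le C^p G(t)^{p/2}$. Combining with the previous step produces the autonomous differential inequality
\begin{equation*}
G'(t)\le 2C^p\,G(t)^{\frac{p}{2}},\qquad t\in[0,T).
\end{equation*}
I would then separate variables, writing $G(t)^{-p/2}G'(t)\le 2C^p$, and integrate in $\tau$ from $t$ to $T$. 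Because $p>2$ we have $1-\tfrac{p}{2}<0$, so the boundary contribution at $\tau=T$ vanishes precisely on account of $G(\tau)\to\infty$, and the left-hand side evaluates to $\tfrac{2}{p-2}G(t)^{\frac{2-p}{2}}$. This delivers the clean estimate $T-t\ge \tfrac{G(t)^{(2-p)/2}}{(p-2)C^p}$.

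Finally, setting $t=0$ gives exactly the claimed lower bound for $T$, with the powers of $2$ coming out neutral since I worked with $G=\int|x|^{-2s}u^2+\|(-\Delta)^{s/2}u\|_2^2$ directly rather than with the half-sized $\mathcal{S}$ of \eqref{H}. For the blow-up rate I would instead rearrange $T-t\ge \tfrac{G(t)^{(2-p)/2}}{(p-2)C^p}$ into $G(t)^{(2-p)/2}\le (p-2)C^p(T-t)$ and raise both sides to the power $\tfrac{2}{2-p}$; since this exponent is negative the inequality reverses, yielding $G(t)\ge C^{\frac{2p}{2-p}}(p-2)^{\frac{2}{2-p}}(T-t)^{\frac{2}{2-p}}$, as asserted. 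The argument is essentially routine once the inequality is set up; the only points that require genuine care are the two places where $p>2$ controls a sign — namely the convergence of the improper integral at the blow-up time $T$ (equivalently, that the boundary term dies rather than diverges), and the reversal of the inequality when raising to the negative power $\tfrac{2}{2-p}$ in the rate estimate. I would flag these as the places where an error would most easily creep in.
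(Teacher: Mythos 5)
Your proposal is correct and follows essentially the same route as the paper: both derive the differential inequality from \eqref{dtI} and \eqref{I}, bound $\|u\|_p^p$ via \eqref{C}, separate variables, and integrate up to the blow-up time. The only cosmetic difference is that you work with the full quantity $G$ rather than the half-sized $\mathcal{S}$ of \eqref{H}, which makes the powers of $2$ cancel at the outset instead of at the end; the constants agree either way.
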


\begin{proof}
For the function
\begin{equation*}
\mathcal{S}(t)=\frac{1}{2}\left(\int_\Omega\frac{u^2}{|x|^{2s}}dx+\left\|(-\Delta)^{\frac{s}{2}}u\right\|_2^2\right),
\end{equation*}
we define in \eqref{H}, one obtains
\begin{equation*}
\lim\limits_{t\rightarrow T^-}\mathcal{S}(t)=\infty
\end{equation*}
from the proof of Theorem \ref{baopo}.
It follows from \eqref{I}, \eqref{dtI} and \eqref{C} that
\begin{equation*}
\begin{split}
\mathcal{S}'(t)&=-\left\|(-\Delta)^{\frac{s}{2}} u\right\|_2^2+\|u\|_p^p\\
&\leq C^p\left\|(-\Delta)^{\frac{s}{2}} u\right\|_2^p\leq 2^{\frac{p}{2}}C^p(\mathcal{S}(t))^{\frac{p}{2}}.
\end{split}\end{equation*}
Obviously,
\begin{equation}\label{th21}
\frac{\mathcal{S}'(t)}{(\mathcal{S}(t))^{\frac{p}{2}}}\leq2^{\frac{p}{2}}C^p.
\end{equation}
Integrating from $0$ to $t$ turns out
\begin{equation*}
-\frac{2}{p-2}(\mathcal{S}(t))^{\frac{2-p}{2}}\leq2^{\frac{p}{2}}C^pt-\frac{2}{p-2}(\mathcal{S}(0))^{\frac{2-p}{2}}.
\end{equation*}
Then taking $t\rightarrow T^-$ and applying $\lim\limits_{t\rightarrow T^-}\mathcal{S}(t)=\infty$ yield
\begin{equation*}
\begin{split}
T&\geq \frac{2^{\frac{2-p}{2}}}{C^p(p-2)}(\mathcal{S}(0))^{\frac{2-p}{2}}\\
&=\frac{1}{C^{p}(p-2)}\left(\int_\Omega\frac{u_0^2}{|x|^{2s}}dx+\left\|(-\Delta)^{\frac{s}{2}} u_0\right\|_2^2\right)^{\frac{2-p}{2}}.
\end{split}\end{equation*}
We integrate \eqref{th21} again from $t$ to $T$ and it holds that
\begin{equation*}
\mathcal{S}(t)\geq\frac{1}{2}C^{\frac{2p}{2-p}}(p-2)^{\frac{2}{2-p}}(T-t)^{\frac{2}{2-p}}.
\end{equation*}
Reviewing the definition of $\mathcal{S}(t)$, we conclude that
\begin{equation*}
\int_\Omega\frac{u^2}{|x|^{2s}}dx+\left\|(-\Delta)^{\frac{s}{2}} u\right\|_2^2\geq C^{\frac{2p}{2-p}}(p-2)^{\frac{2}{2-p}}(T-t)^{\frac{2}{2-p}},
\end{equation*}
which completes the proof.
\end{proof}


Finally, we introduce the stationary problem corresponding to \eqref{1.1}, i.e., the following boundary value problem
\begin{align}\label{2.1}
 \left\{\begin{array}{ll}
     \ds (-\Delta)^su=|u|^{p-2}u,\quad &x\in\Omega,\\
     \ds u(x)=0,\quad &x\in\partial\Omega,
    \end{array}\right.
\end{align}
and establish the existence of ground-state solutions to \eqref{2.1}. Moreover, we prove that the global solution of \eqref{1.1} strongly converge to the solution of \eqref{2.1} as time approaches infinity.
We call $u\in H_0^s(\Omega)$ is a solution of \eqref{2.1} if
\begin{equation}\label{ws}
\langle J'(u),\nu\rangle=\left((-\Delta)^{\frac{s}{2}} u,(-\Delta)^{\frac{s}{2}}\nu\right)-\left(|u|^{p-2}u,\nu\right)=0
\end{equation}
for any $\nu\in H_0^s(\Omega)$. The set $\Phi$ is defined as the collection of all solutions to problem \eqref{2.1}:
\begin{equation*}
\Phi=\{\phi\in H_0^s(\Omega):\langle J'(\phi),\nu\rangle=0,~\forall\nu\in H_0^s(\Omega)\}.
\end{equation*}

\begin{thm}\label{jitaijie}
Assume that \eqref{p} holds. 
Then there exists a function $\phi_0\in \mathcal{N}$ satisfying
\begin{equation}\label{555}
J(\phi_0)=\inf_{\phi\in\mathcal{N}} J(\phi)=d.
\end{equation}
Moreover, $\phi_0$ is a ground-state solution to \eqref{2.1}, that is
\begin{equation}\label{777}
J(\phi_0)=\inf\limits_{\phi\in\Phi\backslash \{0\}} J(\phi) \hbox{~~~and~~~} \phi_0\in\Phi\backslash \{0\}.
\end{equation}
\end{thm}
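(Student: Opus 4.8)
The plan is to obtain $\phi_0$ by the direct method of the calculus of variations, minimizing $J$ over the Nehari manifold $\mathcal{N}$ and then promoting the minimizer to a genuine critical point via a Lagrange multiplier argument. First I would fix a minimizing sequence $\{\phi_k\}\subset\mathcal{N}$ with $J(\phi_k)\to d$. Since $I(\phi_k)=0$, identity \eqref{J=I} gives $J(\phi_k)=\frac{p-2}{2p}\|(-\Delta)^{\frac{s}{2}}\phi_k\|_2^2$, so the sequence is bounded in $H_0^s(\Omega)$; as $H_0^s(\Omega)$ is a Hilbert space, it is reflexive, hence up to a subsequence $\phi_k\rightharpoonup\phi_0$ weakly in $H_0^s(\Omega)$. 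The crucial compactness ingredient is the embedding $H_0^s(\Omega)\hookrightarrow L^p(\Omega)$: in the subcritical range it is compact, so $\phi_k\to\phi_0$ strongly in $L^p(\Omega)$ and $\|\phi_k\|_p^p\to\|\phi_0\|_p^p$.

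Next I would verify that $\phi_0$ is an admissible nonzero minimizer. The limit is nonzero because on $\mathcal{N}$ one has $\|\phi_k\|_p^p=\|(-\Delta)^{\frac{s}{2}}\phi_k\|_2^2=\frac{2p}{p-2}J(\phi_k)\to\frac{2p}{p-2}d>0$ (here $d>0$ by Lemma \ref{d1}), and strong $L^p$ convergence transfers this positive mass to $\phi_0$. To see $\phi_0\in\mathcal{N}$, weak lower semicontinuity of the norm gives $\|(-\Delta)^{\frac{s}{2}}\phi_0\|_2^2\le\liminf_k\|(-\Delta)^{\frac{s}{2}}\phi_k\|_2^2=\frac{2p}{p-2}d$, whence $I(\phi_0)\le\liminf_k I(\phi_k)=0$. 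If the inequality were strict, i.e. $I(\phi_0)<0$, I would rescale: there is a unique $\lambda\in(0,1)$ with $\lambda\phi_0\in\mathcal{N}$, and since $J$ along a ray equals $\frac{p-2}{2p}\lambda^2\|(-\Delta)^{\frac{s}{2}}\phi_0\|_2^2$, this would force $d\le J(\lambda\phi_0)<\frac{p-2}{2p}\|(-\Delta)^{\frac{s}{2}}\phi_0\|_2^2\le d$, a contradiction. Hence $I(\phi_0)=0$, so $\phi_0\in\mathcal{N}$, and then $J(\phi_0)=\frac{p-2}{2p}\|(-\Delta)^{\frac{s}{2}}\phi_0\|_2^2\le\liminf_k J(\phi_k)=d$, while $\phi_0\in\mathcal{N}$ forces $J(\phi_0)\ge d$; therefore $J(\phi_0)=d$, establishing \eqref{555}.

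For the ground-state claim \eqref{777}, I would show $\phi_0$ is a critical point of $J$ by the Lagrange multiplier theorem on the constraint manifold $\{I=0\}$: there is $\mu\in\mathbb{R}$ with $J'(\phi_0)=\mu I'(\phi_0)$. Pairing with $\phi_0$ and using $\langle J'(\phi_0),\phi_0\rangle=I(\phi_0)=0$ together with $\langle I'(\phi_0),\phi_0\rangle=(2-p)\|\phi_0\|_p^p\neq0$ (by $p>2$ and $\phi_0\neq0$) forces $\mu=0$, so $J'(\phi_0)=0$ and $\phi_0\in\Phi\setminus\{0\}$. Finally, every nonzero $\phi\in\Phi$ satisfies $I(\phi)=\langle J'(\phi),\phi\rangle=0$, so $\Phi\setminus\{0\}\subset\mathcal{N}$ and $\inf_{\phi\in\Phi\setminus\{0\}}J(\phi)\ge d=J(\phi_0)$; since $\phi_0$ itself lies in $\Phi\setminus\{0\}$, equality holds and $\phi_0$ is a ground state.

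The step I expect to be the main obstacle is the compactness in the first paragraph. Everything hinges on strong $L^p$ convergence of the minimizing sequence, which is immediate from the compact embedding only when $p<\frac{2n}{n-2s}$; at the critical exponent $p=\frac{2n}{n-2s}$ the embedding is merely continuous, minimizing sequences may concentrate, and the naive argument breaks down (indeed, a Pohozaev-type obstruction is known to preclude nontrivial solutions for the pure critical problem on suitable, e.g. star-shaped, domains). Handling the endpoint would require either restricting \eqref{p} to the subcritical range for this theorem or invoking a concentration-compactness analysis; I would first check whether the intended argument tacitly excludes $p=\frac{2n}{n-2s}$ and, if so, carry out the clean subcritical proof above.
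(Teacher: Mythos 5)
Your proposal is correct and follows essentially the same route as the paper's proof: minimize $J$ over $\mathcal{N}$, extract a weakly convergent subsequence, use the compact embedding $H_0^s(\Omega)\hookrightarrow L^p(\Omega)$ to get strong $L^p$ convergence, rule out $I(\phi_0)<0$ by the same $\lambda\in(0,1)$ rescaling contradiction, and eliminate the Lagrange multiplier by pairing with $\phi_0$ (the paper writes $\langle I'(\phi_0),\phi_0\rangle=(2-p)\|(-\Delta)^{\frac{s}{2}}\phi_0\|_2^2<0$, equivalent to your $(2-p)\|\phi_0\|_p^p$ since $I(\phi_0)=0$). The obstacle you flag at the end is genuine and is not addressed in the paper either: the paper's proof invokes compactness of $H_0^s(\Omega)\hookrightarrow L^p(\Omega)$ for every $p$ allowed by \eqref{p}, which includes the critical exponent $p=\frac{2n}{n-2s}$ where the embedding is merely continuous, so both your argument and the paper's are complete only in the subcritical range.
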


\begin{thm}\label{ccc-22}
Assume that \eqref{p} holds. Let $u=u(t)$ be a global solution to \eqref{1.1}. Then there exists $u^*\in \Phi$ and an increasing sequence $\{t_k\}_{k=1}^\infty$ with $t_k\rightarrow\infty$ as $k\rightarrow\infty$ such that
\begin{equation*}
\lim_{k\rightarrow\infty}\|u(t_k)-u^*\|_{H_0^s}=0.
\end{equation*}
\end{thm}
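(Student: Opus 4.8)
The plan is to combine the energy dissipation identity \eqref{Ju0} with the compactness of the Sobolev embedding to extract a time sequence along which $u(t_k)$ converges to a stationary state. First I would record that, since $u$ is global, $t\mapsto J(u(t))$ is nonincreasing by \eqref{Ju0}, and that the global solutions under consideration satisfy $I(u(t))\geq 0$ for all $t$, which is the content of Theorem \ref{1111} and Lemma \ref{W} in the low-energy regime. Feeding $I(u)\geq 0$ into the splitting \eqref{J=I} yields $\frac{p-2}{2p}\|(-\Delta)^{s/2}u\|_2^2\leq J(u)\leq J(u_0)$, so that $\{u(t)\}_{t\geq 0}$ is bounded in $H_0^s(\Omega)$ and $J(u(t))\geq 0$ is bounded below. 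Consequently the right-hand side of \eqref{Ju0} forces $\int_0^\infty\big(\int_\Omega|x|^{-2s}u_\tau^2\,dx+\|(-\Delta)^{s/2}u_\tau\|_2^2\big)\,d\tau<\infty$.

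Second, since this dissipation integral is finite with nonnegative integrand, its $\liminf$ as $\tau\to\infty$ must vanish; hence there is an increasing sequence $t_k\to\infty$ along which $\int_\Omega|x|^{-2s}u_t(t_k)^2\,dx+\|(-\Delta)^{s/2}u_t(t_k)\|_2^2\to 0$. By the uniform $H_0^s$-bound, after passing to a further subsequence, $u(t_k)\rightharpoonup u^*$ weakly in $H_0^s(\Omega)$; and by the compact embedding $H_0^s(\Omega)\hookrightarrow\hookrightarrow L^p(\Omega)$ I would obtain $u(t_k)\to u^*$ strongly in $L^p(\Omega)$, whence $|u(t_k)|^{p-2}u(t_k)\to|u^*|^{p-2}u^*$ in $L^{p/(p-1)}(\Omega)$ by continuity of the superposition operator.

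Third I would pass to the limit in the weak formulation \eqref{weak} at $t=t_k$. The two terms carrying $u_t(t_k)$ are controlled, via Cauchy--Schwarz together with the Hardy inequality \eqref{f111-22} and the embedding \eqref{C}, by $\big(\int_\Omega|x|^{-2s}u_t(t_k)^2\,dx+\|(-\Delta)^{s/2}u_t(t_k)\|_2^2\big)^{1/2}\|\nu\|_{H_0^s}$, hence vanish as $k\to\infty$; the elliptic term converges by weak convergence and the nonlinear term by the $L^{p/(p-1)}$-convergence above. The limit reads $((-\Delta)^{s/2}u^*,(-\Delta)^{s/2}\nu)=(|u^*|^{p-2}u^*,\nu)$ for every $\nu\in H_0^s(\Omega)$, i.e. $u^*\in\Phi$ by \eqref{ws}. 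Finally, to upgrade to the strong $H_0^s$-convergence claimed, I would test \eqref{weak} at $t_k$ with $\nu=u(t_k)$: the $u_t$-terms again vanish and $\|u(t_k)\|_p^p\to\|u^*\|_p^p$, giving $\|(-\Delta)^{s/2}u(t_k)\|_2^2\to\|u^*\|_p^p=\|(-\Delta)^{s/2}u^*\|_2^2$, where the last equality is the limit equation tested with $u^*$. Norm convergence together with weak convergence in the Hilbert space $H_0^s(\Omega)$ then yields $\|u(t_k)-u^*\|_{H_0^s}\to 0$.

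The main obstacle is the passage to the limit in the nonlinear term, which rests on the compactness of $H_0^s(\Omega)\hookrightarrow L^p(\Omega)$: this is available for $p$ strictly below $2n/(n-2s)$ but fails at the critical exponent $p=2n/(n-2s)$, where the loss of compactness would have to be circumvented. A secondary point requiring care is the a priori boundedness of $\{u(t)\}$ in $H_0^s(\Omega)$ and the lower boundedness of the energy, both of which I extract from the sign condition $I(u(t))\geq 0$ characterizing the global solutions in the potential-well framework.
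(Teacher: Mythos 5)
Your overall strategy (energy dissipation from \eqref{Ju0}, selection of $t_k$ with vanishing dissipation, compact embedding, passage to the limit in the weak formulation, then upgrading weak to strong convergence via norm convergence) is the same as the paper's. However, there is a genuine gap at the very first step: you claim that ``the global solutions under consideration satisfy $I(u(t))\geq 0$ for all $t$'' by appealing to Theorem \ref{1111} and Lemma \ref{W}, and you use this both to bound $J(u)$ from below and to bound $\{u(t)\}$ in $H_0^s(\Omega)$. But those results require the hypotheses $J(u_0)<d$ and $I(u_0)>0$, whereas Theorem \ref{ccc-22} is stated for an \emph{arbitrary} global solution; a high-energy global solution (with $J(u_0)\geq d$, say) is not covered by Lemma \ref{W}, and nothing in the paper excludes $I(u(t))<0$ at some times for such a solution. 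Without $I(u)\geq 0$ your inequality $\frac{p-2}{2p}\|(-\Delta)^{s/2}u\|_2^2\leq J(u)$ fails, so neither the lower bound on $J$ (needed to make the dissipation integral finite) nor the a priori $H_0^s$-bound is justified. You yourself flag this as a ``secondary point requiring care,'' but it is in fact the crux.

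The paper closes this gap in two separate ways. First, $J(u(t))\geq 0$ is obtained by contradiction: if $J(u(t_0))<0$, then \eqref{J=I} (in the form $J=\tfrac12 I+\tfrac{p-2}{2p}\|u\|_p^p$) forces $I(u(t_0))<0$, and since $J(u(t_0))<0<d$ Theorem \ref{baopo} applies at time $t_0$ and yields finite-time blow-up, contradicting globality. Second, boundedness in $H_0^s$ is established \emph{only along the sequence} $t_k$: from the weak formulation one gets $\|J'(u(t_k))\|_{H^{-s}}\leq (C_*+1)\|u'(t_k)\|_{H_0^s}\to 0$, hence $|I(u(t_k))|\leq c\,\|u(t_k)\|_{H_0^s}$, and plugging this into $J-\tfrac1p I=\tfrac{p-2}{2p}\|\cdot\|_{H_0^s}^2$ gives a quadratic inequality that bounds $\|u(t_k)\|_{H_0^s}$. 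You should adopt (or reconstruct) both of these devices. Your remaining steps are sound and essentially match the paper; your final upgrade to strong convergence (testing with $\nu=u(t_k)$ to get $I(u(t_k))\to 0$ and hence norm convergence in the Hilbert space) is a slightly cleaner variant of the paper's computation with $\langle J'(u_k)-J'(u_*),u_k-u_*\rangle$. Your remark about loss of compactness at $p=\frac{2n}{n-2s}$ is fair, but it applies equally to the paper's own argument, which also invokes the compact embedding $H_0^s(\Omega)\hookrightarrow L^p(\Omega)$.
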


\begin{proof}[Proof of Theorem \ref{jitaijie}]
Firstly, we prove there is a $\phi_0\in\mathcal{N}$ satisfying \eqref{555}. It follows from \eqref{d}, \eqref{J=I} and \eqref{N} that
\begin{equation*}
\begin{split}
d&=\inf_{\phi\in\mathcal{N}}\left(\frac{p-2}{2p}\left\|(-\Delta)^{\frac{s}{2}}\phi\right\|_2^2+\frac{1}{p}I(\phi)\right)\\
&=\frac{p-2}{2p}\inf_{\phi\in\mathcal{N}}\left\|(-\Delta)^{\frac{s}{2}}\phi\right\|_2^2.
\end{split}\end{equation*}
Then there exists a minimizing sequence $\{\phi_k\}_{k=1}^\infty\subset\mathcal{N}$ such that
\begin{equation}\label{5}
\lim\limits_{k\rightarrow\infty} J(\phi_k)=\frac{p-2}{2p}\lim\limits_{k\rightarrow\infty}\left\|(-\Delta)^{\frac{s}{2}}\phi_k\right\|_2^2=d.
\end{equation}
This shows that there is a constant $\Xi$ independent of $k$ such that
\begin{equation*}
\left\|(-\Delta)^{\frac{s}{2}}\phi_k\right\|_2^2\leq\Xi
\end{equation*}
hold for all $k=1,2,\cdots$, which, together with $H_0^s(\Omega)$ is a reflexive Banach space and $H_0^s(\Omega)\hookrightarrow L^p(\Omega)$ compactly, indicates that there is a subsequence of $\{\phi_k\}_{k=1}^\infty$, represented by $\{\phi_k\}_{k=1}^\infty$ again, and an element $\phi_0\in H_0^s(\Omega)$ such that
\begin{equation}\label{w}
\phi_k\rightharpoonup\phi_0 \hbox{ weakly in } H_0^s(\Omega)
\end{equation}
and
\begin{equation}\label{s}
\phi_k\rightarrow\phi_0 \hbox{ strongly in } L^p(\Omega)
\end{equation}
as $k\rightarrow\infty$. By $\{\phi_k\}_{k=1}^\infty\in\mathcal{N}$ and \eqref{N}, we have $I(\phi_k)=0$ for all $k$ and so
\begin{equation}\label{6}
\left\|(-\Delta)^{\frac{s}{2}}\phi_k\right\|_2^2=\|\phi_k\|_p^p.
\end{equation}
It follows from \eqref{w} and \eqref{s} that
\begin{equation}\label{3}
\left\|(-\Delta)^{\frac{s}{2}}\phi_0\right\|_2^2\leq\liminf\limits_{k\rightarrow\infty}\left\|(-\Delta)^{\frac{s}{2}}\phi_k\right\|_2^2
=\lim\limits_{k\rightarrow\infty}\|\phi_k\|_p^p=\|\phi_0\|_p^p.
\end{equation}
and
\begin{equation}\label{66}
\left\|(-\Delta)^{\frac{s}{2}}\phi_0\right\|_2^2
\leq\liminf\limits_{k\rightarrow\infty}\left\|(-\Delta)^{\frac{s}{2}}\phi_k\right\|_2^2
\leq\lim\limits_{k\rightarrow\infty}\left\|(-\Delta)^{\frac{s}{2}}\phi_k\right\|_2^2.
\end{equation}

Now we claim that $I(\phi_0)=0$, i.e., $\left\|(-\Delta)^{\frac{s}{2}}\phi_0\right\|_2^2=\|\phi_0\|_p^p$. If not, then it follows \eqref{3} that $\left\|(-\Delta)^{\frac{s}{2}}\phi_0\right\|_2^2<\|\phi_0\|_p^p$. Clearly, we have $\phi_0\neq0$. Then there is a constant $\lambda>0$ defined by
\begin{equation}\label{lm}
\lambda=\left(\frac{\left\|(-\Delta)^{\frac{s}{2}}\phi_0\right\|_2^2}{\|\phi_0\|_p^p}\right)^{\frac{1}{p-2}}<1
\end{equation}
such that $\lambda\phi_0\in\mathcal{N}$. Hence, from \eqref{d}, we reach
\begin{equation}\label{99}
J(\lambda\phi_0)\geq d.
\end{equation}
It follows from \eqref{J=I}, \eqref{lm}, \eqref{N}, \eqref{66} and \eqref{5} that
\begin{equation*}
\begin{split}
J(\lambda\phi_0)&=\frac{p-2}{2p}\lambda^2\left\|(-\Delta)^{\frac{s}{2}}\phi_0\right\|_2^2+\frac{1}{p}I(\lambda\phi_0)\\
&<\frac{p-2}{2p}\left\|(-\Delta)^{\frac{s}{2}}\phi_0\right\|_2^2\\
&\leq d,
\end{split}\end{equation*}
which conflicts with \eqref{99}. Thus $I(\phi_0)=0$.

By \eqref{s}, \eqref{6} and $I(\phi_0)=0$, we obtain
\begin{equation*}
\lim\limits_{k\rightarrow\infty}\left\|(-\Delta)^{\frac{s}{2}}\phi_k\right\|_2^2=\lim\limits_{k\rightarrow\infty}\|\phi_k\|_p^p
=\|\phi_0\|_p^p=\left\|(-\Delta)^{\frac{s}{2}}\phi_0\right\|_2^2,
\end{equation*}
which, together with \eqref{w}, implies
\begin{equation*}
\phi_k\rightarrow\phi_0 \hbox{ strongly in } H_0^s(\Omega)
\end{equation*}
as $k\rightarrow\infty$. Then we derive from \eqref{J=I}, $I(\phi_0)=0$ and \eqref{5} that
\begin{equation*}
\begin{split}
J(\phi_0)&=\frac{p-2}{2p}\left\|(-\Delta)^{\frac{s}{2}}\phi_0\right\|_2^2+\frac{1}{p}I(\phi_0)\\
&=\frac{p-2}{2p}\lim\limits_{k\rightarrow\infty}\left\|(-\Delta)^{\frac{s}{2}}\phi_k\right\|_2^2\\
&=d,
\end{split}\end{equation*}
which means $\phi_0\neq0$. Combined with $I(\phi_0)=0$, we conclude that $\phi_0\in\mathcal{N}$ and \eqref{555} holds.

Secondly, we prove \eqref{777} is true. It follows from \eqref{555} and $\phi_0\in\mathcal{N}$ that there is a constant $\gamma\in\mathbb{R}$ satisfying
\begin{equation}\label{10}
J'(\phi_0)-\gamma I'(\phi_0)=0
\end{equation}
by using the method of Lagrange multipliers. Then by \eqref{I}, one has
\begin{equation}\label{8}
\gamma\langle I'(\phi_0),\phi_0\rangle=\langle J'(\phi_0),\phi_0\rangle=I(\phi_0)=0,
\end{equation}
and
\begin{equation*}
\begin{split}
\langle I'(\phi_0),\phi\rangle&=\frac{d}{d\sigma}I(\phi_0+\sigma\phi)\bigg|_{\sigma=0}\\
&=\frac{d}{d\sigma}\left[\left\|(-\Delta)^{\frac{s}{2}}(\phi_0+\sigma\phi)\right\|_2^2-\|(\phi_0+\sigma\phi)\|_p^p\right]\bigg|_{\sigma=0}\\
&=\left[2\left((-\Delta)^{\frac{s}{2}}(\phi_0+\sigma\phi),(-\Delta)^{\frac{s}{2}}\phi\right)-p\left(|\phi_0+\sigma\phi|^{p-2}(\phi_0+\sigma\phi),\phi\right)\right]\bigg|_{\sigma=0}\\
&=2\left((-\Delta)^{\frac{s}{2}}\phi_0,(-\Delta)^{\frac{s}{2}}\phi\right)-p\left(|\phi_0|^{p-2}\phi_0,\phi\right).
\end{split}\end{equation*}
Taking $\phi=\phi_0$ above and by $I(\phi_0)=0$, we have
\begin{equation*}
\begin{split}
\langle I'(\phi_0),\phi_0\rangle&=2\left\|(-\Delta)^{\frac{s}{2}}\phi_0\right\|_2^2-p\|\phi_0\|_p^p\\
&=(2-p)\left\|(-\Delta)^{\frac{s}{2}}\phi_0\right\|_2^2\\
&<0.
\end{split}\end{equation*}
By \eqref{8}, we have $\gamma=0$. And $J'(\phi_0)=0$ by \eqref{10}. Then it follows from \eqref{ws} that
\begin{equation*}
\langle J'(\phi_0),\nu\rangle=\left((-\Delta)^{\frac{s}{2}} \phi_0,(-\Delta)^{\frac{s}{2}}\nu\right)-\left(|\phi_0|^{p-2}\phi_0,\nu\right)=0
\end{equation*}
for all $\nu\in H_0^s(\Omega)$, i.e. $\phi_0$ is a solution for \eqref{2.1}.

For any $\phi\in\Phi$, by \eqref{ws}, one has
\begin{equation*}
\left((-\Delta)^{\frac{s}{2}} \phi,(-\Delta)^{\frac{s}{2}}\nu\right)=\left(|\phi|^{p-2}\phi,\nu\right),\quad\forall\nu\in H_0^s(\Omega).
\end{equation*}
Taking $\nu=\phi$ above, we have $I(\phi)=0$ by \eqref{I}. Then applying \eqref{N}, it holds that
\begin{equation*}
\Phi\backslash \{0\}\subset\mathcal{N},
\end{equation*}
which, together with \eqref{555} and $\phi_0\in\Phi\backslash \{0\}$, shows that
\begin{equation*}
J(\phi_0)=d=\inf\limits_{\phi\in\mathcal{N}} J(\phi)\leq\inf\limits_{\phi\in\Phi\backslash \{0\}} J(\phi)\leq J(\phi_0)
\end{equation*}
and so $J(\phi_0)=\inf\limits_{\phi\in\Phi\backslash \{0\}} J(\phi)$. The proof is complete.
\end{proof}

\begin{remark}
Theorem \ref{jitaijie} implies that the potential well depth $d$ defined in \eqref{d} is achievable.
\end{remark}


\begin{proof}[Proof of Theorem \ref{ccc-22}]
Let $u=u(t)$ be a global solution to problem \eqref{1.1}.
From \eqref{Ju0}, we observe that $J(u(t))$ is non-increasing over $t\in[0,\infty)$. Hence, $J(u)\leq J(u_0)$ holds for all $t\in[0,\infty)$. We assert that $J(u)\geq 0$ for all $t\in[0,\infty)$. Suppose otherwise that there exists $t_0\in[0,\infty)$ such that $J(u(t_0))<0$. Combined with \eqref{J=I}, this implies $I(u(t_0))<0$. By Theorem \ref{baopo}, we conclude that $u$ blows up in finite time, contradicting the assumption that $u$ is a global solution. Therefore,
\begin{equation}\label{ff1-22}
0\leq J(u)\leq J(u_0),\quad \forall t\in[0,\infty).
\end{equation}
From \eqref{ff1-22} and the non-increasing property of $J(u(t))$ over $t\in[0,\infty)$, there exists a constant $C_0\in[0,J(u_0)]$ such that
\begin{equation}\label{ff2-22}
\lim\limits_{t\rightarrow\infty}J(u)=C_0.
\end{equation}
Letting $t\rightarrow\infty$ in \eqref{Ju0}, we obtain
\begin{equation}\label{ff3-22}
\int_0^\infty\int_\Omega\frac{u_\tau^2}{|x|^{2s}}dxd\tau+\int_0^\infty\left\|(-\Delta)^{\frac{s}{2}}u_\tau\right\|_2^2d\tau=J(u_0)-C_0\leq J(u_0),
\end{equation}
which implies that there exists an increasing sequence $\{t_k\}_{k=1}^\infty$ with $t_k\rightarrow\infty$ as $k\rightarrow\infty$ such that
\begin{equation*}
\lim_{k\rightarrow\infty}\left(\int\limits_\Omega\frac{(u'(t_k))^2}{|x|^{2s}}dx+\left\|(-\Delta)^{\frac{s}{2}}u'(t_k)\right\|_2^2\right)=0,
\end{equation*}
and consequently,
\begin{equation}\label{ff4-22}
\lim_{k\rightarrow\infty}\|u'(t_k)\|_{H_0^s}=\lim_{k\rightarrow\infty}\left\|(-\Delta)^{\frac{s}{2}}u'(t_k)\right\|_2=0.
\end{equation}

According to \eqref{J} and \eqref{weak}, for any $\phi\in H_0^s(\Omega)$, we have
\begin{equation}\label{ff5-22}
\begin{split}
&\quad\langle J'(u(t)),\phi\rangle\\
&=\frac{d}{d\tau}J(u(t)+\tau\phi)\bigg|_{\tau=0}\\
&=\left[\left((-\Delta)^{\frac{s}{2}}(u(t)+\tau\phi),(-\Delta)^{\frac{s}{2}}\phi\right)-\left(|u(t)+\tau\phi|^{p-2}(u(t)+\tau\phi),\phi\right)\right]\bigg|_{\tau=0}\\
&=\left((-\Delta)^{\frac{s}{2}}u(t),(-\Delta)^{\frac{s}{2}}\phi\right)-\left(|u(t)|^{p-2}u(t),\phi\right)\\
&=-\left(|x|^{-2s}u'(t),\phi\right)-\left((-\Delta)^{\frac{s}{2}}u'(t),(-\Delta)^{\frac{s}{2}}\phi\right),
\end{split}
\end{equation}
Combining \eqref{ff5-22}, \eqref{f111-22} and \eqref{fanshudy-22}, we derive
\begin{equation*}
\begin{split}
&\quad\|J'(u(t_k))\|_{H^{-s}(\Omega)}\\
&=\sup_{\|\phi\|_{H_0^s}\leq1}\big|\langle J'(u(t_k)),\phi\rangle\big|\\
&\leq\sup_{\|\phi\|_{H_0^s}\leq1}\left(\left\||x|^{-s}u'(t_k)\right\|_2\||x|^{-s}\phi\|_2+\left\|(-\Delta)^{\frac{s}{2}}u'(t_k)\right\|_2\left\|(-\Delta)^{\frac{s}{2}}\phi\right\|_2\right)\\
&\leq\sup_{\|\phi\|_{H_0^s}\leq1}\left\|(-\Delta)^{\frac{s}{2}}\phi\right\|_2\left(C_*\left\|(-\Delta)^{\frac{s}{2}}u'(t_k)\right\|_2+\left\|(-\Delta)^{\frac{s}{2}}u'(t_k)\right\|_2\right)\\
&\leq(C_*+1)\|u'(t_k)\|_{H_0^s},
\end{split}
\end{equation*}
which, together with \eqref{ff4-22}, yields
\begin{equation}\label{ff6-22}
\lim_{k\rightarrow\infty}\|J'(u(t_k))\|_{H^{-s}(\Omega)}=0.
\end{equation}
Therefore, there exists a constant $c>0$ such that
\begin{equation}\label{fff8-22}
\begin{split}
|I(u(t_k))|&=|\langle J'(u(t_k)),u(t_k)\rangle|\\
&\leq\|J'(u(t_k))\|_{H^{-s}(\Omega)}\|u(t_k)\|_{H_0^s}\\
&\leq c\|u(t_k)\|_{H_0^s}.
\end{split}
\end{equation}
By \eqref{ff1-22}, \eqref{fff8-22} and \eqref{J=I}, we obtain
\begin{equation*}
\begin{split}
J(u_0)+\frac{c}{p}\|u(t_k)\|_{H_0^s}&\geq J(u(t_k))-\frac{1}{p}I(u(t_k))\\
&=\frac{p-2}{2p}\left\|(-\Delta)^{\frac{s}{2}}u(t_k)\right\|_2^2\\
&=\frac{p-2}{2p}\|u(t_k)\|_{H_0^s}^2,
\end{split}
\end{equation*}
which implies the existence of a constant $K_1>0$ such that
\begin{equation}\label{ff8-22}
\|u(t_k)\|_{H_0^s}\leq K_1, \quad k=1,2,\cdots.
\end{equation}
Since the embedding $H_0^s(\Omega)\hookrightarrow L^p(\Omega)$ is compact, there exists an increasing subsequence of $\{t_k\}_{k=1}^\infty$ (still denoted by $\{t_k\}_{k=1}^\infty$) and $u_*\in H_0^s(\Omega)$ such that, as $k\rightarrow\infty$ (we denote $u_k:=u(t_k)$):
\begin{align}
&u_k\rightharpoonup u_* \hbox{ weakly in } H_0^s(\Omega),\label{ff9-22}\\
&u_k\rightarrow u_* \hbox{ strongly in } L^p(\Omega).\label{ff10-22}
\end{align}

Next, we prove that $u_*$ is a solution to the stationary problem \eqref{2.1}.
From \eqref{ff10-22}, there exists a subsequence of $\{u_k\}_{k=1}^\infty$ (still denoted by $\{u_k\}_{k=1}^\infty$) and a function $v\in L^p(\Omega)$ such that, as $k\rightarrow\infty$,
\begin{equation}\label{ff21-22}
u_k(x)\rightarrow u_*(x) \hbox{ for a.e. } x\in\Omega,
\end{equation}
and for all $k$,
\begin{equation}\label{ff22-22}
|u_k(x)|\leq v(x) \hbox{ for a.e. } x\in\Omega.
\end{equation}
We show that for any $\phi\in H_0^s(\Omega)$, as $k\rightarrow\infty$,
\begin{equation}\label{ff23-22}
\left(|u_k|^{p-2}u_k,\phi\right)\rightarrow\left(|u_*|^{p-2}u_*,\phi\right).
\end{equation}
By \eqref{ff21-22}, as $k\rightarrow\infty$, we have
\begin{equation}\label{ff24-22}
|u_k(x)|^{p-2}u_k(x)\phi(x)\rightarrow |u_*(x)|^{p-2}u_*(x)\phi(x) \hbox{ for a.e. } x\in\Omega,
\end{equation}
and from \eqref{ff22-22}, it follows that for all $k$,
\begin{equation}\label{ff25-22}
\left||u_k(x)|^{p-2}u_k(x)\phi(x)\right|\leq \left||v(x)|^{p-2}v(x)\phi(x)\right| \hbox{ for a.e. } x\in\Omega.
\end{equation}
Applying H\"{o}lder's inequality and \eqref{C}, we derive
\begin{equation*}
\int_\Omega\left||v(x)|^{p-2}v(x)\phi(x)\right|dx\leq\|v(x)^{p-1}\|_{\frac{p}{p-1}}\|\phi(x)\|_p\leq C\|v(x)\|_p^{p-1}\|\phi(x)\|_{H_0^s},
\end{equation*}
Since $v\in L^p(\Omega)$ and $\phi\in H_0^s(\Omega)$, it follows that $\int_\Omega\left||v(x)|^{p-2}v(x)\phi(x)\right|dx<\infty$, i.e.,
\begin{equation*}
|v(x)|^{p-2}v(x)\phi(x)\in L^1(\Omega).
\end{equation*}
Combining \eqref{ff24-22}, \eqref{ff25-22} and invoking the Lebesgue's dominated convergence theorem, we conclude that \eqref{ff23-22} holds.
Letting $k\rightarrow \infty$ in
\begin{equation*}
\langle J'(u_k),\phi\rangle=\left((-\Delta)^{\frac{s}{2}} u_k,(-\Delta)^{\frac{s}{2}}\phi\right)-\left(|u_k|^{p-2}u_k,\phi\right),
\end{equation*}
and using \eqref{ff6-22}, \eqref{ff9-22} and \eqref{ff23-22}, we obtain
\begin{equation*}
0=\left((-\Delta)^{\frac{s}{2}} u_*,(-\Delta)^{\frac{s}{2}}\phi\right)-\left(|u_*|^{p-2}u_*,\phi\right),
\end{equation*}
which implies that $u_*$ is a solution to \eqref{2.1}.

Next, we prove that $\lim_{k\rightarrow\infty}\|u_k-u^*\|_{H_0^s}=0$.
From \eqref{J}, we derive
\begin{equation*}
\begin{split}
&\quad\langle J'(u_k),u_k-u_*\rangle\\
&=\frac{d}{d\tau}J(u_k+\tau (u_k-u_*))\bigg|_{\tau=0}\\
&=\big\{\left((-\Delta)^{\frac{s}{2}}[u_k+\tau(u_k-u_*)],(-\Delta)^{\frac{s}{2}}(u_k-u_*)\right)\\
&\quad-\left(|u_k+\tau(u_k-u_*)|^{p-2}[u_k+\tau(u_k-u_*)],u_k-u_*\right)\big\}\big|_{\tau=0}\\
&=\left((-\Delta)^{\frac{s}{2}}u_k,(-\Delta)^{\frac{s}{2}}(u_k-u_*)\right)-\left(|u_k|^{p-2}u_k,u_k-u_*\right),
\end{split}
\end{equation*}
Similarly,
\begin{equation}\label{ff12-22}
\begin{split}
&\quad\langle J'(u_*),u_k-u_*\rangle\\
&=((-\Delta)^{\frac{s}{2}}u_*,(-\Delta)^{\frac{s}{2}}(u_k-u_*))-\left(|u_*|^{p-2}u_*,u_k-u_*\right).
\end{split}
\end{equation}
Thus,
\begin{equation}\label{ff16-22}
\langle J'(u_k)-J'(u_*),u_k-u_*\rangle=\left\|(-\Delta)^{\frac{s}{2}}(u_k-u_*)\right\|_2^2-\Gamma,
\end{equation}
where $\Gamma:=\left(|u_k|^{p-2}u_k-|u_*|^{p-2}u_*,u_k-u_*\right)$.
By H\"{o}lder's inequality, \eqref{C} and \eqref{ff8-22}, we obtain
\begin{equation*}
\begin{split}
|\Gamma|
&\leq\|u_*^{p-1}-(u_k)^{p-1}\|_{\frac{p}{p-1}}\|u_k-u_*\|_p\\
&\leq\left(\|u_*\|_p^{p-1}+\|u_k\|_p^{p-1}\right)\|u_k-u_*\|_p\\
&\leq\left(C^{p-1}\|u_*\|_{H_0^s}^{p-1}+C^{p-1}\|u_k\|_{H_0^s}^{p-1}\right)\|u_k-u_*\|_p\\
&\leq\left(C^{p-1}\|u_*\|_{H_0^s}^{p-1}+C^{p-1}K_1^{p-1}\right)\|u_k-u_*\|_p,
\end{split}
\end{equation*}
Combining this with \eqref{ff10-22}, we conclude
\begin{equation}\label{ff11-22}
\lim_{k\rightarrow\infty}|\Gamma|=0.
\end{equation}
By \eqref{ff9-22}, \eqref{ff10-22} and \eqref{ff12-22}, we deduce
\begin{equation}\label{ff13-22}
\lim_{k\rightarrow\infty}\langle J'(u_*),u_k-u_*\rangle=0.
\end{equation}
From \eqref{ff8-22}, we have
\begin{equation*}
\begin{split}
\big|\langle J'(u_k),u_k-u_*\rangle\big|&\leq\|J'(u_k)\|_{H^{-s}(\Omega)}(\|u_k\|_{H_0^s}+\|u_*\|_{H_0^s})\\
&\leq\|J'(u_k)\|_{H^{-s}(\Omega)}(K_1+\|u_*\|_{H_0^s}),
\end{split}
\end{equation*}
which, together with \eqref{ff6-22}, yields
\begin{equation}\label{ff15-22}
\lim_{k\rightarrow\infty}\big|\langle J'(u_k),u_k-u_*\rangle\big|=0.
\end{equation}
It follows from \eqref{ff16-22}, \eqref{ff11-22}, \eqref{ff13-22} and \eqref{ff15-22} that
\begin{equation*}
\begin{split}
\lim_{k\rightarrow\infty}\left\|(-\Delta)^{\frac{s}{2}}(u_k-u_*)\right\|_2^2
=\lim_{k\rightarrow\infty}\left[\langle J'(u_k)-J'(u_*),u_k-u_*\rangle+\Gamma\right]
=0,
\end{split}
\end{equation*}
which completes the proof.
\end{proof}

\begin{remark}
From Theorem \ref{1111} we notice that the global solution decays to $0$ at the exponential rate as $t\rightarrow\infty$ under some suitable conditions, while Theorem \ref{ccc-22} indicates that the general global solution of \eqref{1.1} strongly converge to the solution of \eqref{2.1} as $t\rightarrow \infty$.
\end{remark}

\section*{Compliance with Ethical Standards}
{\bf Conflict of interest} The authors declare that they have no conflict of interest.

\section*{Acknowledgement}
The authors would like to express deep gratitude to Professor Marcel Clerc for his very helpful suggestions. This work was supported by the National Natural Science Foundation of China (12171339 and 12471296).



\end{document}